\documentclass[12pt]{article}
\topmargin -15mm 
\textheight 24truecm
\textwidth 16truecm
\oddsidemargin 5mm
\evensidemargin 5mm

\usepackage{amsmath}
\usepackage{amsfonts}
\usepackage{amsthm}
\usepackage{graphicx}
\graphicspath{{Figures/}} 
%pour rajouter au repertoire courant le sous-repert. Figures 
%ou aller chercher les figures (necessite le package graphicx)
\usepackage{overpic}
\usepackage{amssymb} 
\usepackage{pictex}
\usepackage{rotating}  
\usepackage{xcolor}
\usepackage{cite} %pour liste de ref. remplac. par un tiret

\usepackage{enumitem} %si on veut enlever l'indent dans env. enumerate
%\begin{enumerate}[leftmargin=*] %pour enlever l'indent

\usepackage{accents} %pour avoir un cercle sur une lettre
%Exemple : \accentset{\circ}{K}
%POUR TIKZ
\usepackage{tikz}
\usetikzlibrary{matrix}
\usetikzlibrary{arrows}
\usetikzlibrary{arrows.meta,calc,decorations.markings,math,arrows.meta}
%%%%%%%%%%%%
%POUR LES FLECHES SUR LES ARCS
\tikzset{
    set arrow inside/.code={\pgfqkeys{/tikz/arrow inside}{#1}},
    set arrow inside={end/.initial=>, opt/.initial=},
    /pgf/decoration/Mark/.style={
        mark/.expanded=at position #1 with
        {
            \noexpand\arrow[\pgfkeysvalueof{/tikz/arrow inside/opt}]{\pgfkeysvalueof{/tikz/arrow inside/end}}
        }
    },
    arrow inside/.style 2 args={
        set arrow inside={#1},
        postaction={
            decorate,decoration={
                markings,Mark/.list={#2}
            }
        }
    },
}
\usepackage{etex} %pour pb de \dimen
\definecolor{refkey}{rgb}{0,0,1}
\definecolor{labelkey}{rgb}{0,0,1}
%mettre toute la ligne en comment pour ne plus voir les showkeys

%\usepackage{showlabels}
%\showlabels[\small\color{blue}]{cite}

%\showlabels[\small\color{blue}]{bibitem}

\usepackage{comment}
%\usepackage{/usr/share/texmf/tex/latex/misc/refcheck} 
%\usepackage[active]{srcltx} 
%------------------------------------------
\numberwithin{equation}{section}

%%Pour ne pas indenter apres un theoreme (ou autre)
\usepackage{etoolbox}

\newtheorem{theorem}{Theorem}%[section]
%Sans la section pas de 0...(1/21)
%\newtheorem{theorem}{Theorem}[section]
%\AfterEndEnvironment{theorem}{\noindent\ignorespaces}

\newtheorem{proposition}[theorem]{Proposition}
\newtheorem{lemma}[theorem]{Lemma}

\newtheorem{Definition}[theorem]{Definition}
\newenvironment{definition}{\begin{Definition}\rm}{\end{Definition}}
%\AfterEndEnvironment{definition}{\noindent\ignorespaces}
\newtheorem{Remark}[theorem]{Remark}
\newenvironment{remark}{\begin{Remark}\rm}{\end{Remark}}
%\AfterEndEnvironment{remark}{\noindent\ignorespaces}
\newtheorem{Example}[theorem]{Example}

%\AfterEndEnvironment{example}{\noindent\ignorespaces}

 %{{ }} pour Mathjax (4/20)

\newcommand{\R}{{\mathbb R}}
% Pb avec \S, Tapez directement \mathbb{S}

 %Pb avec \SS

\newcommand{\om}{\omega}

\newcommand{\p}{\partial}

\let\oldforall\forall %To avoid infinite loop
\renewcommand{\forall}{\oldforall \, }
\let\oldexist\exists
\renewcommand{\exists}{\oldexist \: }

\def \capa{{\rm cap}\,}

\def\supp{\mathop{\mathrm{supp}}\nolimits}

\renewcommand{\bar}{\overline}
\renewcommand{\tilde}{\widetilde}

\usepackage[bookmarksopen, naturalnames]{hyperref}
\setcounter{tocdepth}{1} %no subsection in TOC

\begin{document}

\title{Coulomb equilibrium in the external field of an attractive-repellent pair of charges}
\author{R. Orive and F. Wielonsky}

\maketitle 
\begin{abstract}
The aim of this paper is to provide a complete analysis of
the Coulomb equilibrium problem in the euclidean space $\R^d$, $d\geq2$, associated to the kernel $1/|x|^{d-2}$, with a non-convex external field created by an attractive-repellent pair of charges placed in $\R^{d+1} \setminus \R^d$.
We consider the \textit{admissible} setting, where the equilibrium measure is compactly supported, as well as the limiting \textit{weakly admissible} setting, with a weaker external field at infinity, where the existence of the equilibrium measure still holds but possibly with an unbounded support. The main tools for our analysis are the notions of signed equilibrium and
balayage of measures.
We note that for certain configurations of charges and distances to the conductor, the support of the equilibrium measure is a shell (multidimensional annulus).
\end{abstract}
%%%%%%%%%%%%%%%%%%%%
\section{Introduction}
We study the problem of minimizing the Coulomb energy of a probability measure $\mu$ in $\R^{d}$, $d\geq2$, that is the Coulomb equilibrium, in the radial and non-convex external field created by a pair $\gamma=(\gamma_{1},-\gamma_{2})$ of positive/negative charges in $\R^{d+1}$, located respectively at $H_{1}=(0,h_{1})$ and $H_{2}=(0,h_{2})$, at some heights $h_{1}$ and $h_{2}$ above the origin of $\R^{d}$. 

More precisely, we look for the equilibrium measure $\mu_{Q}$ in $\R^{d}$ minimizing the weighted energy $I^{Q}(\mu)$ of a probability measure $\mu$, given by
\begin{equation*}\label{weightedenergy}
I^{Q}(\mu) =
\begin{cases}
{\displaystyle\iint\frac{d\mu(x)d\mu(t)}{|x-t|^{d-2}}+\int Q(x)d\mu(x),\; Q(x)=\frac{\gamma_{1}}{|x-H_{1}|^{d-2}}
-\frac{\gamma_{2}}{|x-H_{2}|^{d-2}},\; d\geq3,}
\\[10pt]
{\displaystyle\iint\log\frac{1}{|x-t|}d\mu(x)d\mu(t)+\int Q(x)d\mu(x),\;
Q(x)=\log\frac{|x-H_{2}|^{\gamma_{2}}}{|x-H_{1}|^{\gamma_{1}}},
\quad d=2,}
\end{cases}
\end{equation*}
where $|\cdot|$ denotes the euclidean distance in $\R^{d}$ or $\R^{d+1}$. Note that, when $d=2$, Coulomb interaction is given by the logarithmic kernel $\log(1/|x|)$.

We need to assume that the confinement exerted by the external field $Q$ is sufficiently strong, namely that the strength of the negative charge is sufficiently larger than the one of the positive charge,
which translates into the inequality $\gamma_{1}-\gamma_{2}\leq-1$. This condition ensures existence and uniqueness of a solution, 
%with energy larger than $-\infty$, 
see \cite[Corollary 2.9]{DOSW1}. The limiting case $\gamma_{1}-\gamma_{2}=-1$ corresponds to the so-called weakly admissible setting, which was also considered in previous studies, and in different contexts, see e.g.\ \cite{S,HK,BLW,OSW,DOSW1}.

Our method of proof is based on the notion of signed equilibrium measure, see Section \ref{Prelim} for a precise definition, that was introduced originally in \cite{KD} for the logarithmic kernel in the complex plane, and which has been used recently in more general contexts, see e.g.\ \cite{OSW, DOSW1, DOSW2}. 
This method could be applied with a general distribution of masses along the vertical axis, instead of two single charges, but then with less explicit results.

Though the above problem can be seen as a problem taking place in $\R^{d+1}$, we consider Coulomb interaction with respect to the $d$-dimensional space $\R^{d}$.
We note that the concrete electrostatic problem in physics, posed in $\R^{d+1}$, actually corresponds to the Riesz potential $1/|x|^{d-1}$ in $\R^{d}$.

Probably, the most interesting situation in our analysis is that in which the support of the equilibrium measure is a shell (see Theorem 1 part (ii), below). As it is well known (see e.g. \cite{BHS}), the (unweighted) equilibrium measure of a shell is supported on its outer boundary for the Coulomb setting. In the present paper, we find situations where the support of the weighted equilibrium measure is a full annulus.
On the other hand, for the strictly subharmonic Riesz setting, when $\max (0,d-2)<s<d$, the unweighted equilibrium measure of a shell occupies the whole of it, but an explicit 
expression of its density is unknown %, except for $d=1$ 
(see \cite{Copson} or \cite{Clements} for some classical approaches to the electrostatic problem on a planar ring domain). More recently, it was conjectured in \cite{DOSW2} that the support of the equilibrium measure of a ball in a certain external field is a shell; but there it was only proved for the one-dimensional case.
It is also worth to mention that in \cite[Theorem I.1]{B} it is proved that for a radial external field $Q(x) = Q(|x|)$, which is convex in the radial direction, the support has to be an annulus (including the ball as a particular case); while in \cite[Theorem II.9]{B} a formula for the density of the equilibrium measure in a radial external field is given in terms of a complicated Fredholm integral equation of the second kind.
Finally, we note that
%, in the present paper, the external field is radial but non-convex, 
the consideration of the Coulomb kernel allows us to derive explicit formulas for all of the quantites under study.

Our main results are given in the following two theorems. First we consider admissible external fields $Q$ such that $-\gamma_{2}+\gamma_{1}<-1$.
\begin{theorem}\label{main} i) Assume 
%either $h_{2}< h_{1}$ or $h_{1}<h_{2}$ and 
$\gamma_{2}/\gamma_{1}\geq(h_{2}/h_{1})^{d}$. Then the equilibrium measure $\mu_{Q}$ is supported on the ball $B_{R_{s}}=\{x\in\R^{d},\,|x|\leq R_{s}\}$, whose radius $R_{s}$ satisfies the equation
\begin{equation}\label{sol-rad}
\frac{\gamma_{2}}{(1+(h_{2}/R_{s})^{2})^{d/2}}-\frac{\gamma_{1}}{(1+(h_{1}/R_{s})^{2})^{d/2}}=1
\quad\iff \quad \begin{cases}
R_{s}^{d-1}Q'(R_{s})=d-2,\quad d\geq3,
\\[5pt]
R_{s}Q'(R_{s})=1,\quad d=2.
\end{cases}
\end{equation}
It has a continuous density with respect to the Lebesgue measure in $\R^{d}$, given by
\begin{equation}\label{density1}
\frac{d\mu_{Q}}{dx}(x)=\frac{d}{|S^{d-1}|}\left(\frac{\gamma_2h_2^2}{(|x|^{2}+h_2^2)^{d/2+1}}-\frac{\gamma_1h_1^2}{(|x|^{2}+h_1^2)^{d/2+1}}\right),\qquad|x|\leq R_{s},
\end{equation}
where $|S^{d-1}|$ denotes the surface area of the $(d-1)$-dimensional unit sphere.\\
ii) Assume %$h_{1}< h_{2}$ and 
$\gamma_{2}/\gamma_{1}<(h_{2}/h_{1})^{d}$. The support of $\mu_{Q}$ is a shell 
$$\{x\in\R^{d},\,R_{0}\leq|x|\leq R_{s}\},$$ 
with outer radius $R_{s}$ given by (\ref{sol-rad}), density given by (\ref{density1}), and inner radius $0<R_{0}<R_{s}$ satisfying the equation
\begin{equation}\label{eq-inner}
\left(\frac{R_{0}^{2}+h_{1}^{2}}{R_{0}^{2}+h_{2}^{2}}\right)^{d}=\left(\frac{\gamma_{1}}{\gamma_{2}}\right)^{2}\quad\iff\quad Q'(R_{0})=0.
\end{equation}
When $\gamma_{2}/\gamma_{1}=(h_{2}/h_{1})^{d}$, which corresponds to the transition between the two previous cases, the density (\ref{density1}) vanishes at the origin. 
\end{theorem}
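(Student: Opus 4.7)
I would follow the signed-equilibrium/balayage route indicated in the Introduction. The plan is to guess the support first---a ball $B_{R_s}$ in case (i), an annulus $\{R_0\le|x|\le R_s\}$ in case (ii)---produce a candidate measure supported there whose weighted potential $U^{\mu}+Q$ is constant on the support, and finally verify the remaining Euler--Lagrange inequality off the support. Uniqueness of the equilibrium measure under the admissibility hypothesis $\gamma_1-\gamma_2<-1$ (\cite[Corollary~2.9]{DOSW1}) then forces this candidate to coincide with $\mu_Q$.

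\textbf{Density and outer radius.} Since the Coulomb kernel is, up to the constant $(d-2)|S^{d-1}|$ (resp.\ $2\pi$ for $d=2$), the fundamental solution of $-\Delta$ in $\R^d$, applying $\Delta$ to the equality $U^{\mu_Q}+Q=F$ on the interior of the support yields the pointwise identity $\rho=\Delta Q/[(d-2)|S^{d-1}|]$ for the absolutely continuous density. A direct computation, using $|x-H_i|^2=|x|^2+h_i^2$ for $x\in\R^d$, gives
\[
\Delta_x\frac{1}{|x-H_i|^{d-2}}=-\frac{d(d-2)h_i^2}{(|x|^2+h_i^2)^{d/2+1}}\qquad(d\ge 3),
\]
with the obvious analogue $\Delta_x\log|x-H_i|=2h_i^2/(|x|^2+h_i^2)^2$ for $d=2$, so that combining the two terms of $Q$ produces formula (\ref{density1}) uniformly in $d\ge 2$. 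To identify the outer radius, I impose $\int\rho\,dx=1$. Using the primitive $\int r^{d-1}(r^2+h^2)^{-d/2-1}\,dr=r^d/[dh^2(r^2+h^2)^{d/2}]$ (checked directly, or via $r=h\tan\theta$), the integral over $B_R$ telescopes to
\[
\int_{B_R}\rho\,dx=\frac{\gamma_2}{(1+(h_2/R)^2)^{d/2}}-\frac{\gamma_1}{(1+(h_1/R)^2)^{d/2}},
\]
and the normalization $=1$ recovers (\ref{sol-rad}); the equivalent form $R_s^{d-1}Q'(R_s)=d-2$ (resp.\ $R_sQ'(R_s)=1$) follows by a radial differentiation of $Q$.

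\textbf{Dichotomy and inner radius.} Evaluating (\ref{density1}) at the origin gives $\rho(0)\propto\gamma_2 h_1^d-\gamma_1 h_2^d$, so the sign of $\gamma_2/\gamma_1-(h_2/h_1)^d$ splits the two cases. In case (i) an elementary monotonicity argument on the ratio $(r^2+h_1^2)^{d/2+1}/(r^2+h_2^2)^{d/2+1}$ shows $\rho\ge 0$ throughout $B_{R_s}$, so the candidate is a genuine probability measure. In case (ii) $\rho$ becomes negative near the origin and the support acquires a hole $\{|x|<R_0\}$; on this hole $U^{\mu_Q}$ is a bounded radial harmonic function, hence constant, and $C^1$ matching of the potential at $|x|=R_0$ (no singular component of $\mu_Q$ is placed on the inner sphere) combined with the Euler--Lagrange equality on the shell forces $Q'(R_0)=0$, which rearranges into (\ref{eq-inner}). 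A short recomputation shows that at a zero of $Q'$ the inner-boundary contribution to the mass integral on the annulus vanishes identically, so the outer radius is still determined by (\ref{sol-rad}). The transition case $\gamma_2/\gamma_1=(h_2/h_1)^d$ corresponds to $R_0=0$, i.e.\ $\rho(0)=0$.

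\textbf{Main obstacle.} The principal technical step is the global verification of the Euler--Lagrange inequality $U^{\mu_Q}+Q\ge F$ \emph{off} the support. Inside the hole in case (ii), this reduces to $Q(r)\ge Q(R_0)$ for $r\le R_0$, which follows from $Q'(R_0)=0$ combined with a sign analysis of $Q'$ on $[0,R_0]$. For the exterior $\{|x|>R_s\}$, I would invoke the signed-equilibrium identification: the candidate $\mu_Q$ is identified with the balayage onto $B_{R_s}$ of the ``image'' densities $g_i(y)=dh_i^2/[|S^{d-1}|(|y|^2+h_i^2)^{d/2+1}]$ on $\R^d$, which satisfy $U^{g_i}(x)=1/|x-H_i|^{d-2}$ by matching Laplacians and decay at infinity (uniqueness of the Poisson problem with zero data at infinity). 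The inequality then becomes an instance of the classical balayage principle that balayage onto a compact set decreases potentials off that set, combined with Newton's shell theorem for the radial part $U^{\mu_Q}(x)=1/|x|^{d-2}$ outside $B_{R_s}$. Stitching these pieces together with the mass-normalization bookkeeping is where the bulk of the technical work lies, and it is the signed-equilibrium machinery of \cite{OSW,DOSW1,DOSW2} that keeps the argument explicit.
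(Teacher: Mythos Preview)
Your approach is correct and more direct than the paper's. The paper builds the signed equilibrium $\eta_{Q,R}$ on every ball $B_R$ explicitly (continuous density $g_c$ plus a singular boundary term $g_s(R)\tilde\sigma_R$), then uses an abstract localization result, Proposition~\ref{prop-eta}(iii) and (iv), to trap $\supp\mu_Q$ inside $B_{R_s}$ and, in case (ii), outside $B_{R_0}$; finally Proposition~\ref{prop-eta}(ii) says that once the signed equilibrium on the trapped set is a positive measure, it must equal $\mu_Q$. The Frostman inequality off the support is thus never checked by hand. You instead guess the measure, determine $R_s$ by mass normalization and $R_0$ by $C^1$ matching, and verify Frostman directly via Newton's shell theorem and one-variable calculus on $Q$. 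Your route avoids the signed-equilibrium apparatus (including the new items (iii)--(iv) of Proposition~\ref{prop-eta}, which the paper proves for this purpose) but requires more explicit case checking; the paper's route is more systematic and would transfer more readily to less explicit external fields.

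Two points to tighten. First, your Laplacian argument shows only that $U^{\mu_Q}+Q$ is radial and harmonic on the interior of the support, hence of the form $a+br^{2-d}$; you still need to kill $b$. The clean way is the one you already sketch: write the candidate as the restriction of $\gamma_2 g_2-\gamma_1 g_1$ (where $U^{g_i}(x)=|x-H_i|^{2-d}$ on $\R^d$) to the support, and apply Newton's theorem to the exterior remainder (constant inside $B_{R_s}$) and, in case (ii), to the interior remainder (whose potential on the shell is $m/|x|^{d-2}$ with $m$ the signed mass of $\gamma_2 g_2-\gamma_1 g_1$ on $B_{R_0}$, which vanishes precisely when $Q'(R_0)=0$). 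Second, in your last paragraph the candidate $\mu_Q$ is not the \emph{balayage} of the image densities onto $B_{R_s}$---that balayage carries a nonzero singular part on $S_{R_s}$ and has the wrong total mass---but their \emph{restriction}; the singular part of the genuine signed equilibrium disappears at $R=R_s$ exactly because your normalization (\ref{sol-rad}) is the equation $g_s(R_s)=0$.
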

Note that when $h_{2}<h_{1}$, that is the negative charge is closer to $\R^{d}$ than the positive charge, i) takes place. For the statements in ii) to take place it is necessary that $h_{1}<h_{2}$.

For weakly external fields $Q$ such that $\gamma_{2}-\gamma_{1}=1$ the results are as follows.
\begin{theorem}\label{main2}
i) When $(\gamma_{2}/\gamma_{1})^{1/2}<h_{1}/h_{2}$ (which entails that $h_{2}<h_{1}$), the measure $\mu_{Q}$ is supported on a ball, with a radius and a density still satisfying (\ref{sol-rad}) and (\ref{density1}). 
\\
ii) When $(\gamma_1/\gamma_2)^{1/d} \leq h_1/h_2 \leq (\gamma_2/\gamma_1)^{1/2}$, 
$\mu_{Q}$ is supported on the whole space $\R^{d}$ with density given by (\ref{density1}).\\
iii) When $h_{1}/h_{2}<(\gamma_{1}/\gamma_{2})^{1/d}$  (which entails that $h_{1}<h_{2}$), the support of $\mu_{Q}$ is 
the complement of a ball of radius $R_{0}$ satisfying (\ref{eq-inner}). The density is given by (\ref{density1}).
\end{theorem}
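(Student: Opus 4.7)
The plan is to follow the scheme used for Theorem~\ref{main}, now adapted to the weakly admissible regime $\gamma_2-\gamma_1=1$: we write the candidate equilibrium measure as a (restriction of a) signed combination of balayage measures and verify its Euler--Lagrange conditions. Let $\nu_H$ denote the balayage onto $\R^d$, with respect to the $d$-dimensional Coulomb kernel, of the Dirac mass $\delta_H$ at $H=(0,h)\in\R^{d+1}\setminus\R^d$. As already used for Theorem~\ref{main}, $\nu_H$ is a probability measure on $\R^d$ with density $\frac{d\,h^2}{|S^{d-1}|(|x|^2+h^2)^{d/2+1}}$ and satisfies $U^{\nu_H}(x)=|x-H|^{-(d-2)}$ on $\R^d$ (with the logarithmic analogue for $d=2$). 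Thus formula~(\ref{density1}) is the density of the signed measure $\gamma_2\nu_{H_2}-\gamma_1\nu_{H_1}$, and the elementary identity $\int_{B_R}d\nu_H=[1+(h/R)^2]^{-d/2}$ recasts~(\ref{sol-rad}) as the statement that this signed measure has total mass $1$ on $B_{R_s}$ and~(\ref{eq-inner}) as the statement that it has total mass $0$ on $B_{R_0}$.

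The radial density~(\ref{density1}) has the sign of $g(r):=\gamma_2 h_2^2(r^2+h_1^2)^{d/2+1}-\gamma_1 h_1^2(r^2+h_2^2)^{d/2+1}$, which is monotone in $r$ and whose endpoint signs $g(0)\propto\gamma_2 h_1^d-\gamma_1 h_2^d$, $g(\infty)\propto\gamma_2 h_2^2-\gamma_1 h_1^2$ partition the parameter range into exactly the three regimes (i),~(ii),~(iii) of the statement. In Case~(ii) one has $g\ge 0$ on $[0,\infty)$, so $\mu_Q=\gamma_2\nu_{H_2}-\gamma_1\nu_{H_1}$ is a genuine probability measure on $\R^d$ of total mass $\gamma_2-\gamma_1=1$, and the balayage identity immediately yields $U^{\mu_Q}(x)+Q(x)\equiv 0$ on $\R^d$. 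This is the Euler--Lagrange equality (with constant $F_Q=0$) characterizing the weakly admissible equilibrium, so $\mu_Q$ is the desired measure.

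In Cases~(i) and~(iii), take $\mu_Q:=(\gamma_2\nu_{H_2}-\gamma_1\nu_{H_1})|_E$ with $E=B_{R_s}$ in~(i) and $E=\R^d\setminus B_{R_0}$ in~(iii); the normalizations~(\ref{sol-rad}) and~(\ref{eq-inner}) ensure total mass $1$, and the sign analysis of $g$ gives nonnegativity of the density on $E$. On the support, a rearrangement of the balayage identity yields
\[
U^{\mu_Q}(x)+Q(x)=-U^{(\gamma_2\nu_{H_2}-\gamma_1\nu_{H_1})|_{E^c}}(x),\qquad x\in E,
\]
and one concludes by the classical Newton shell property of the Coulomb kernel in $\R^d$: in Case~(i) the complementary signed measure is radial and supported outside $B_{R_s}$, hence produces a constant potential on $B_{R_s}$; in Case~(iii) it is radial, supported inside $B_{R_0}$, with zero net mass (precisely~(\ref{eq-inner})), so its potential vanishes identically on $\R^d\setminus B_{R_0}$. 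Either way, $U^{\mu_Q}+Q$ is constant on $E$, which is the required Euler--Lagrange equality.

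The main technical obstacle is the Euler--Lagrange \emph{inequality} $U^{\mu_Q}+Q\ge F_Q$ on $E^c$: in Case~(iii), for instance, the ball $B_{R_0}$ intersects the region where the radial density~(\ref{density1}) is already positive (between the unique zero of $g$ and $R_0$), so the inequality is not automatic and must be extracted from a direct monotonicity analysis of the radial profile of $U^{\mu_Q}+Q$, together with the sign pattern of $g$. A further subtlety is the $d=2$ case, where both $U^{\mu_Q}$ and $Q$ carry logarithmic tails that cancel precisely because $\gamma_2-\gamma_1=1$; this reflects the pinning of the equilibrium constant in the weakly admissible regime. As a cross-check, one may pass to the limit $\gamma_2-\gamma_1\searrow 1$ in Theorem~\ref{main} and use weak convergence of the admissible equilibria together with the explicit density formula to identify the limit with the measure described above.
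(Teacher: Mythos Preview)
Your approach is genuinely different from the paper's and is in principle sound, but you stop short of the one step that actually carries weight. The paper never verifies the Frostman inequalities directly. Instead it computes the signed equilibrium measure $\eta_{Q,R}$ on every ball $B_R$ (and, in Case~B, on every exterior domain $E_R$), obtaining a continuous part with density $g_c$ and a singular part on $S_R$ of mass $g_s(R)$; it then invokes the abstract localisation principles of Proposition~\ref{prop-eta}(iii)--(iv), which say that as long as the singular boundary mass stays negative the support of $\mu_Q$ cannot reach that sphere. The Frostman inequality off the support is thus absorbed into the general machinery of signed equilibria and balayage, and never has to be checked by hand.

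Your route---write down the candidate $(\gamma_2\nu_{H_2}-\gamma_1\nu_{H_1})|_E$, normalise, and verify Euler--Lagrange---handles the \emph{equality} on $E$ very cleanly via Newton's shell theorem, and this part is correct and arguably more transparent than the paper's argument. But you explicitly flag the \emph{inequality} on $E^c$ as ``the main technical obstacle'' and then do not resolve it; ``must be extracted from a direct monotonicity analysis'' is a statement of intent, not a proof. This is precisely the nontrivial content in Cases~(i) and~(iii), since (as you correctly observe) the discarded region contains mass of both signs. The computation can in fact be closed: for instance in Case~(iii), writing the potential of the discarded signed measure on $B_{R_0}$ via Newton's formula and integrating by parts reduces the inequality to $g_s(r)\ge 1$ on $[0,R_0]$, which follows from $g_s(0)=g_s(R_0)=1$ and the unimodality of $g_s$ established in Lemma~\ref{lem-gc-gs}. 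An analogous reduction works in Case~(i). Without this step (or the limiting argument you allude to at the end, which would itself require a justification of weak convergence of $\mu_Q$ in the weakly admissible limit), the proof is incomplete. A minor point: $g$ as you define it is not monotone in $r$; what is monotone is the ratio $(r^2+h_1^2)/(r^2+h_2^2)$, which is what actually gives the single sign change.
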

These theorems complete the preliminary results that were obtained for more general Riesz kernels in \cite[Theorems 4.1 and 4.5]{DOSW1}.

The particular case $h_{2}=h_{1}:=h$ corresponds to a single negative charge of mass $-\gamma=-\gamma_{2}+\gamma_{1}\leq-1$ placed at the point $(0,h)\in\R^{d+1}$. It was studied for general Riesz kernels $1/|x|^{s}$, $d-2\leq s<d$, in \cite[Theorem 4.1]{DOSW1}, where the formulas for the Coulomb case $s=d-2$ were just derived 
%(without justification) 
as limits, as $s\downarrow d-2$, from those of the Riesz case. Since the formulas in \cite[Theorem 4.1]{DOSW1} simplify in the specific case $s=d-2$, we restate them here.
\begin{theorem}\label{main3} Assume a single negative charge of mass $-\gamma\leq-1$ is placed at the point $(0,h)\in\R^{d+1}$ above the origin of $\R^{d}$. If $\gamma>1$ then the support of $\mu_{Q}$ is a ball of radius $R_{s}$ satisfying
$$
\frac{\gamma}{(1+(h/R_{s})^{2})^{d/2}}=1\quad\iff\quad R_{s}=\frac{h}{\sqrt{\gamma^{2/d}-1}},
$$
and the density of $\mu_{Q}$ is given by
$$
\frac{d\mu_{Q}}{dx}(x)=\frac{d\gamma h^2}{|S^{d-1}|(|x|^{2}+h^2)^{d/2+1}}.
$$
If $\gamma=1$ then the support of $\mu_{Q}$ is the whole space $\R^{d}$ with a density still given by the expression above.
\end{theorem}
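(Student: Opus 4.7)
The plan is to derive Theorem~\ref{main3} as a direct specialization of Theorems~\ref{main} and \ref{main2} to the degenerate configuration $h_1=h_2=:h$. In that limit the external field $Q$ becomes the Coulomb potential of a single net charge $-\gamma = \gamma_1-\gamma_2$ placed at $(0,h)$, so the two-charge formulas depend only on the difference $\gamma = \gamma_2-\gamma_1$, and any splitting into masses $\gamma_1,\gamma_2\geq 0$ with $\gamma_2-\gamma_1=\gamma$ is equivalent.

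For the admissible case $\gamma>1$, I first observe that the dichotomy $\gamma_2/\gamma_1 \gtrless (h_2/h_1)^d$ of Theorem~\ref{main} collapses, at $h_1=h_2$, to $\gamma_2 \gtrless \gamma_1$. Since $\gamma_2-\gamma_1 = \gamma > 1$, only case (i) applies and $\supp\mu_Q$ is a ball. Substituting $h_1=h_2=h$ into (\ref{sol-rad}) gives $\gamma(1+(h/R_s)^2)^{-d/2}=1$, which is solved explicitly by $R_s = h/\sqrt{\gamma^{2/d}-1}$. The density (\ref{density1}) likewise collapses to the single summand $d\gamma h^2 / (|S^{d-1}|(|x|^2+h^2)^{d/2+1})$, matching the claim.

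For the weakly admissible case $\gamma=1$, Theorem~\ref{main2} offers three alternatives according to the position of $h_1/h_2 = 1$ relative to $(\gamma_1/\gamma_2)^{1/d}$ and $(\gamma_2/\gamma_1)^{1/2}$. Parts (i) and (iii) would require $\gamma_2<\gamma_1$ and $\gamma_1>\gamma_2$ respectively, both incompatible with $\gamma_2-\gamma_1=1$; hence only part (ii) applies, its chain of inequalities $(\gamma_1/\gamma_2)^{1/d}\leq 1\leq (\gamma_2/\gamma_1)^{1/2}$ being automatic. Therefore $\supp\mu_Q = \R^d$, and the density formula again specializes to the stated expression.

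The only mild obstacle is the apparent degeneracy of the ratios $\gamma_2/\gamma_1$ and $(\gamma_1/\gamma_2)^{1/d}$ if one takes $\gamma_1=0$ literally from the outset; this is handled either by the one-sided limit $\gamma_1 \to 0^+$, under which all relevant inequalities remain stable and the explicit formulas pass continuously, or by fixing any splitting with $\gamma_1>0$ and $\gamma_2=\gamma+\gamma_1$ and invoking the uniqueness of $\mu_Q$ to conclude that the answer depends only on $\gamma$.
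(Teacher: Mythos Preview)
Your proposal is correct and follows essentially the same strategy as the paper: derive Theorem~\ref{main3} as a specialization of Theorems~\ref{main} and~\ref{main2}. The only difference is the particular degeneration chosen: you set $h_1=h_2=h$ and keep an arbitrary splitting $\gamma_2-\gamma_1=\gamma$, whereas the paper sets $\gamma_1=0$, $\gamma_2=\gamma$, $h_2=h$ and lets $h_1$ be any positive number (say $h_1>h_2$). Both choices collapse the external field to that of a single charge, and both place the configuration squarely in case~(i) of Theorem~\ref{main} (resp.\ case~(ii) of Theorem~\ref{main2}); your version has the small advantage of addressing explicitly the indeterminate ratio $\gamma_2/\gamma_1$, which the paper leaves implicit.
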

\begin{remark}
We may observe that in both theorems the support of the equilibrium measure $\mu_{Q}$ is always a connected set. Moreover, contrary to the behavior of the unweighted equilibrium measure, $\mu_{Q}$ never puts mass on the boundary of its support. Finally, its density with respect to the Lebesgue measure in $\R^{d}$ does not vanish on that boundary. The latter also signifies a distinct contrast with the strictly subharmonic setting when $\max (0,d-2) < s <d$, see \cite{KuML} for $d=1$ and $s=0$, and \cite{DOSW1} for $d\geq 2$ and $d-2<s<d$.
\end{remark}
%%%%%%%%%%%%%%%%%%%%
\begin{figure}\label{Fig-main2}
\begin{center}
\begin{tikzpicture}[scale=2.1]
\draw (0,-.1) -- (0,2) [arrow inside={end=stealth,opt={scale=3}}{1}];
\draw (-.25,0) -- (2,0) [arrow inside={end=stealth,opt={scale=3}}{1}];
%\draw (0,0) -- (1.9,1);
\draw (0,0) -- (1.5,1.9);
\draw (1.2,1.85) node {$L_{2}$};
\draw (-0.25,1.75) node {$h_{2}$};
\draw (1.75,-0.25) node {$h_{1}$};
\draw (0,0) node {$\bullet$};
\draw (0,-.25) node {$(0,0)$};
\draw[fill=gray](1.3,.7) circle (.15);
\draw[fill=gray](.5,1.5) circle (.15); %Anneau
\draw [fill=white](.5,1.5) circle (.07);
\end{tikzpicture}
 \hspace{1.8cm}
\begin{tikzpicture}[scale=2.1]
\draw (0,-.1) -- (0,2) [arrow inside={end=stealth,opt={scale=3}}{1}];
\draw (-.25,0) -- (2,0) [arrow inside={end=stealth,opt={scale=3}}{1}];
\draw (0,0) -- (1.9,1);
\draw (1.85,.75) node {$L_{1}$};
\draw (0,0) -- (1.5,1.9);
\draw (1.2,1.85) node {$L_{2}$};
\draw (-0.25,1.75) node {$h_{2}$};
\draw (1.75,-0.25) node {$h_{1}$};
\draw (0,0) node {$\bullet$};
\draw (0,-.25) node {$(0,0)$};
\draw[fill=gray](1.3,.3) circle (.15);
\draw (1,.8) node {$\R^{d}$};
\draw(.5,1.2) circle (.1); %Exterieur de la boule
 \foreach \angle in { 0,15,...,360 }{\draw [rotate around={\angle:(.5,1.2)}] (.6,1.2)  -- +(.1,0);}
\end{tikzpicture}
\end{center}
\caption{The different types of supports for the equilibrium measure according to the heights $h_{1}$ and $h_{2}$, the admissible case on the left, the weakly admissible case on the right. The line $L_{1}$ has equation $h_{2}=(\gamma_{1}/\gamma_{2})^{1/2}h_{1}$ and the line $L_{2}$ has equation $h_{2}=(\gamma_{2}/\gamma_{1})^{1/d}h_{1}$.}
\end{figure}
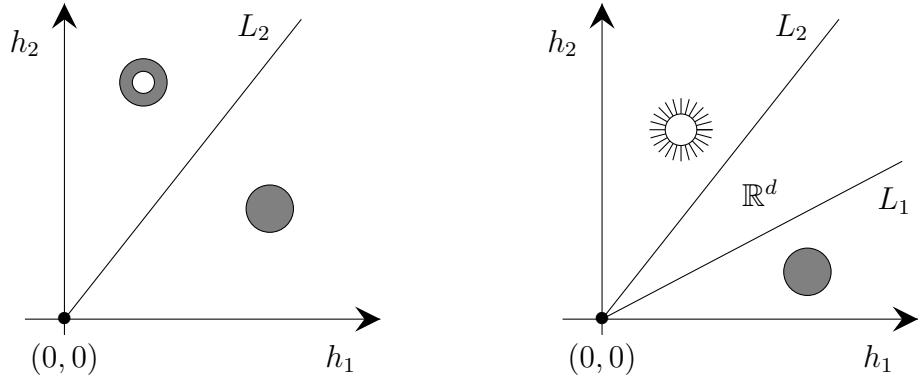
%Pour avoir des nombres encercles
\newcommand*\circled[1]{\tikz[baseline=(char.base)]{
    \node[shape=circle,draw,inner sep=1pt] (char) {#1};}}
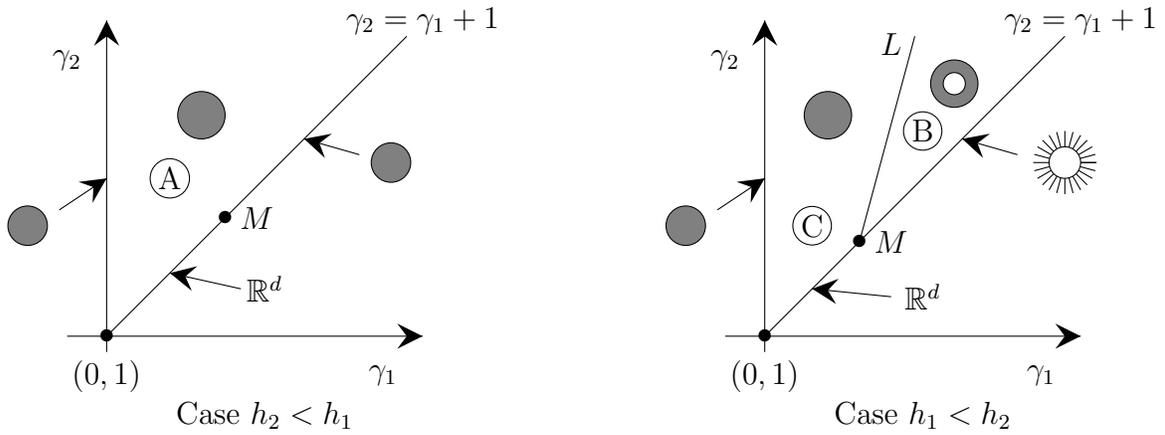
\begin{figure}
\begin{center}
\begin{tikzpicture}[scale=2.1]
\draw (0,-.1) -- (0,2) [arrow inside={end=stealth,opt={scale=3}}{1}];
\draw (-.25,0) -- (2,0) [arrow inside={end=stealth,opt={scale=3}}{1}];
\draw (0,0) -- (1.9,1.9);
\draw (2,2) node {$\gamma_{2}=\gamma_{1}+1$};
\draw (-0.25,1.75) node {$\gamma_{2}$};
\draw (1.75,-0.25) node {$\gamma_{1}$};
\draw (0,0) node {$\bullet$};
\draw (0,-.25) node {$(0,1)$};
\draw (.75,.75) node {$\bullet$}; \draw (.95,.75) node {$M$};
%\draw (.75,.75) -- (.95,1.9); %segment qui part du point
%\draw (.8,1.85) node {$L$};
\draw (1.6,1.15) -- (1.25,1.25) [arrow inside={end=stealth,opt={scale=3}}{1}];
\draw[fill=gray](1.8,1.1) circle (.125);
\draw (0.85,.3) -- (0.4,0.4) [arrow inside={end=stealth,opt={scale=3}}{1}];
\draw (1,.3) node {$\R^{d}$};
\draw (-0.3,.8) -- (0.,1) [arrow inside={end=stealth,opt={scale=3}}{1}];
\draw[fill=gray](-.5,.7) circle (.125);
\draw[fill=gray](.6,1.4) circle (.15);
%\draw[fill=gray](1.3,1.7) circle (.15);
\draw (.4,1) node {\circled{A}};
%\draw (.3,.65) node {\circled{B}};
\draw (1,-.5) node{Case $h_{2}<h_{1}$};
\end{tikzpicture}
\hspace{1.8cm}
\begin{tikzpicture}[scale=2.1]
\draw (0,-.1) -- (0,2) [arrow inside={end=stealth,opt={scale=3}}{1}];
\draw (-.25,0) -- (2,0) [arrow inside={end=stealth,opt={scale=3}}{1}];
\draw (0,0) -- (1.9,1.9);
\draw (2,2) node {$\gamma_{2}=\gamma_{1}+1$};
\draw (-0.25,1.75) node {$\gamma_{2}$};
\draw (1.75,-0.25) node {$\gamma_{1}$};
\draw (0,0) node {$\bullet$};
\draw (0,-.25) node {$(0,1)$};
\draw (.6,.6) node {$\bullet$}; \draw (.8,.6) node {$M$};
\draw (.6,.6) -- (.95,1.9); %segment qui part du point
\draw (.8,1.85) node {$L$};
\draw (1.6,1.15) -- (1.25,1.25) [arrow inside={end=stealth,opt={scale=3}}{1}];
%\draw (2,1.1) node {$\R^{d}\setminus B_{R}$}; %circle (.125);
\draw(1.9,1.1) circle (.1); %Exterieur de la boule
\foreach \angle in { 0,15,...,360 }{\draw [rotate around={\angle:(1.9,1.1)}] (2,1.1)  -- +(.1,0);}
 
\draw (0.8,.25) -- (0.3,0.3) [arrow inside={end=stealth,opt={scale=3}}{1}];
\draw (1,.25) node {$\R^{d}$};
\draw (-0.3,.8) -- (0.,1) [arrow inside={end=stealth,opt={scale=3}}{1}];
\draw[fill=gray](-.5,.7) circle (.125); %left circle
\draw[fill=gray](.4,1.4) circle (.15); 
\draw[fill=gray](1.2,1.6) circle (.15); %Anneau
\draw [fill=white](1.2,1.6) circle (.07);
\draw (1.0,1.3) node {\circled{B}};
\draw (.3,.7) node {\circled{C}};
\draw (1,-.5) node{Case $h_{1}<h_{2}$};
\end{tikzpicture}
\end{center}
\caption{The different types of supports for the equilibrium measure according to the values of $\gamma_{1}$ and $\gamma_{2}$, above the line $\gamma_{2}=\gamma_{1}+1$. Below that line, the external field $Q$ is not admissible and the equilibrium problem has no solution. The circled letters $A,B,C$ refer to the different cases defined in Section \ref{Proofs}, see (\ref{Cases}).}
\label{Fig-main}
\end{figure}
Figure \ref{Fig-main2} represents the different types of equilibrium that may occur, depending on the  values of the heights $h_{1}$ and $h_{2}$.

Figure \ref{Fig-main} displays the different types of equilibrium according to the values of the charges $\gamma_{1}$ and $\gamma_{2}$.
%, the case $h_{2}<h_{1}$ on the left, the case $h_{1}<h_{2}$ on the right. 
External fields $Q$ which are admissible correspond, on both pictures, to the sector above the line $\gamma_{2}=\gamma_{1}+1$, with that line corresponding to weakly admissible $Q$'s. As depicted, the support of $\mu_{Q}$ can be a ball, the entire space $\R^{d}$, a ring, or the complement of a ball. The point $M$ has coordinates 
$$
\left(\frac{h_{2}^{2}}{h_{1}^{2}-h_{2}^{2}},\frac{h_{1}^{2}}{h_{1}^{2}-h_{2}^{2}}\right)\quad\text{on the left},\qquad 
\left(\frac{h_{1}^{d}}{h_{2}^{d}-h_{1}^{d}},\frac{h_{2}^{d}}{h_{2}^{d}-h_{1}^{d}}\right)\quad\text{on the right}.
$$
The segment $L$ that starts from $M$ on the right picture is a subset of the line of equation
$\gamma_{2}=(h_{2}/h_{1})^{d}\gamma_{1}$.
The part of the $\gamma_{2}$-axis, above the point $(0,1)$, corresponds to the case of a single negative charge $-\gamma_{2}\leq-1$. The support of $\mu_{Q}$ is then always a ball, except at the boundary point $(0,1)$ where it is the entire space $\R^{d}$.
\begin{figure}[htb]
\includegraphics[width=7cm]{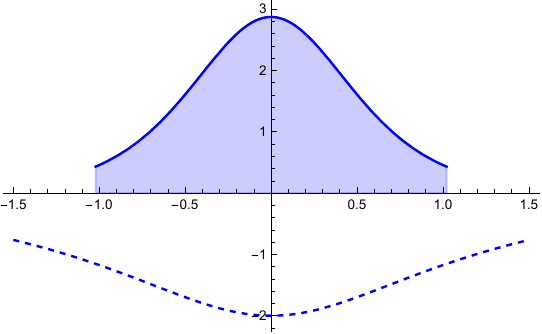}\hspace{1cm}
\includegraphics[width=7cm]{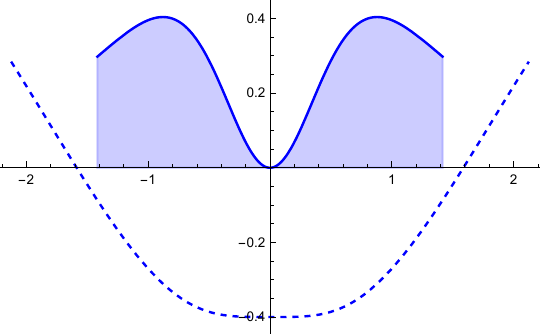}
\\[10pt]
\includegraphics[width=7cm]{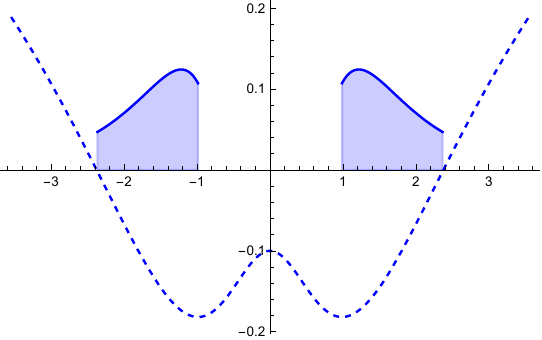}\hspace{1cm}
\includegraphics[width=7cm]{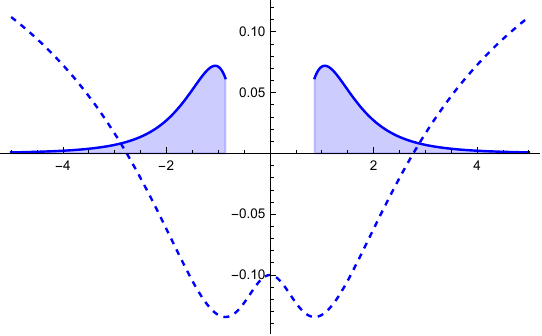}
\caption{Examples in $\R^{3}$ of radial densities of $\mu_{Q}$ when the support is a ball, a ball with a density vanishing at the origin (transition between a ball and a shell), a shell, and the complement of a ball. The external field $Q$ is represented by the dashed curve (up to an additive constant to fit the graph). Note that the densities do not vanish on the boundaries of the supports.}
\end{figure}
In Section \ref{Prelim} we give some preliminaries results useful for the sequel and in Section \ref{Proofs} we present the proofs of our main theorems.
%%%%%%%%%%%%
\section{Preliminaries}\label{Prelim}
In the sequel, we denote by \\[5pt]
-- $B_{R}^{d}$, or simply $B_{R}$, the closed $d$-dimensional ball in $\R^{d}$ of radius $R$,
%(or simply $B_{R}$ if there is no ambiguity on the dimension), 
\\[5pt]
-- $E_R$ the closure of the complement of $B_{R}$ in $\R^{d}$ (where $E$ stands for ``exterior''),
\\[5pt]
-- $S_{R}^{d}$, or simply $S_{R}$, the $d$-dimensional sphere of radius $R$, written $S^{d}$ or $S$ when $R=1$, \\[5pt]
-- $|S^{d}|$ the surface area of the $d$-dimensional unit sphere, \\[5pt]
-- $d\sigma_{R}^{d-1}$ (or $d\sigma_{R}$) the surface measure on $S_{R}^{d-1}$, \\[5pt]
-- $d\tilde\sigma_{R}^{d-1}$ (or $d\tilde\sigma_{R}$) the normalized surface measure on $S_{R}^{d-1}$, \\[5pt]
-- $\mu_{Q,R}$ the equilibrium measure on $B_{R}$ with external field $Q$, \\[5pt]
-- $\eta_{Q,R}$ the signed equilibrium measure on $B_{R}$ with external field $Q$, see Definition \ref{def-sign} below for that notion.
\\[5pt]\indent
%DEFINITION OF weighted equilibrium problem,\\
%SUFFICIENT COND FOR EXISTENCE,\\
The potential of a measure $\mu$ is defined as
$$
U^{\mu}(x)=\int\frac{d\mu(t)}{|x-t|^{d-2}},\quad d\geq3,\qquad
U^{\mu}(x)=\int\log\frac{1}{|x-t|}d\mu(t),\quad d=2,
$$
and its energy by
$$
I(\mu)=\int U^{\mu}(x)d\mu(x).
$$
The infimum of $I(\mu)$ among all probability measures $\mu$ supported on a closed set $\Sigma$ is the energy, denoted $W(\Sigma)$, of the set $\Sigma$. Its capacity is defined by 
$$\capa(\Sigma)=W(\Sigma)^{-1},\quad d\geq3,\qquad\capa(\Sigma)=e^{-W(\Sigma)},\quad d=2.
$$
When it exists, the equilibrium measure $\mu_{Q}$, minimizing the weighted energy $I^Q(\mu)$ for a general external field $Q$ on a closed set $\Sigma\subset\R^{d}$ of positive capacity, is unique and characterized by the Frostman inequalities, see e.g.\ \cite[Theorem 2.1]{DOSW1},
\begin{align}
U^{\mu_Q} (x) + Q(x)  & \geq F_{Q},\quad \textcolor{black}{\text{q.e. on}}\; \Sigma,\label{Frost1}\\
U^{\mu_Q} (x) + Q(x)  & \leq F_{Q},\quad x\in \supp(\mu_Q), \label{Frost2}
\end{align}
where q.e.\ is an abbreviation for quasi-everywhere, i.e.\ outside of a set of capacity zero, or equivalently a set of infinite energy. The finite constant
$$F_{Q}=I(\mu_{Q})+\int Qd\mu_{Q},$$
is referred to as the equilibrium constant or the (modified) Robin constant. 

We recall the notion of {\em balayage} of a measure, see \cite[Chapter IV, \S 1-2]{Lan}, and \cite[Theorems II.4.1 and II.4.4]{ST} when $d=2$. Given
%$d-2 \leq s <d$,
a closed set $F\subset \R^d$ 
%of positive capacity 
and a {positive} measure $\sigma$ \textcolor{black}{of finite total mass}, 
there exists a {positive} measure, which we denote $Bal(\sigma,F)$, the balayage of $\sigma$ onto $F$, satisfying $\supp(Bal(\sigma,F))\subset \p F$ and
%$$Bal(\sigma,F)$$called the Riesz $s$-balayage 
%$\bullet$ $S_{\widehat{\sigma}} \subseteq F$, \\
%$\bullet$ $\widehat{\sigma}$ is zero on the set of irregular points of \textcolor{black}{the complement of $F$}, and
\begin{align}\label{balayage} 
U^{Bal(\sigma,F)}(x) & = U^{\sigma}(x)+C\text{ q.e.\ on }F,
\\[5pt]
U^{Bal(\sigma,F)}(x) & \leq U^{\sigma}(x)+C\text{ q.e.\ on }\R^{d},
%,\qquad U^{\widehat{\sigma}}(x) \leq U^{\sigma}(x)\text{ on }\R^d.
\end{align}
where $C=0$ when $d\geq3$ or $d=2$ and $F$ has a bounded complement.
If $d=2$, the balayage preserves the total mass of the measure, but for $d\geq3$, in general, a mass loss occurs (for instance if $F$ is bounded, see e.g. \cite{DOSW1} and references therein).
%$\|\widehat{\sigma}_0\| = \|\sigma\|$, but this is not true for general $d-2 \leq s < d$.
%Actually, $\|\widehat{\sigma}\| = \|\sigma\|$ for all measures $\sigma$ if and only if $F$ is not thin at infinity, see \cite[Theorem 3.22]{FZ}, \cite[Corollary 5.3]{Z3}.
%In the logarithmic case, \eqref{balayage} is modified in the sense that
%\begin{equation*}%\label{logbalayage}
%U^{\widehat{\sigma}_{0}}(z) = U^{\sigma}(z) + C\quad \text{q.e. on }F,\qquad
%U^{\widehat{\sigma}_{0}}(z) \leq U^{\sigma}(z) + C\;\;\text{on }\C,
%\end{equation*}
%where $C=0$ if, for example, $\C \setminus F$ is a bounded set.

We now introduce one of the main tools for our analysis, namely the notion of signed equilibrium measure. It is a variation of the usual equilibrium measure, where the positivity of the measure is relaxed while the constraint on its potential is tightened. It was originally introduced in \cite{KD} for the logarithmic potential in the complex plane.
\begin{definition}\label{def-sign}
Let $\Sigma$ be a closed subset of $\R^d$. A {\it signed equilibrium measure} for $\Sigma$ in the external field $Q$ is a finite signed measure $\eta_{Q,\Sigma}$, with finite energy, supported on $\Sigma$ such that $\eta_{Q,\Sigma} (\Sigma) = 1$ and there exists a finite constant $C_{Q,\Sigma}$ such that
\begin{equation}\label{defsigned}
U^{\eta_{Q,\Sigma}}(x) + Q(x) = C_{Q,\Sigma}\quad\text{q.e. on }\Sigma.
\end{equation}
\end{definition}
It is known that, when it exists, the signed equilibrium measure is unique, see e.g.\ \cite[Lemma 23]{BDS}.
The next proposition describes some of its important properties. 
%that will be useful in the sequel.
The first two items are taken from
\cite[Lemma 3.15]{DOSW1}. The third and fourth items are new. They are variations of the third item in that reference.
\begin{proposition}\label{prop-eta}
Let $Q$ be a weight on $\Sigma$, a closed subset of $\R^d$ \textcolor{black}{of positive capacity}.
Assume an equilibrium measure $\mu_{Q,\Sigma}$ and a signed equilibrium measure $\eta_{Q,\Sigma}$ exist. Denote by $\eta_{Q,\Sigma}^+$ the positive part in the Jordan decomposition of $\eta_{Q,\Sigma}$.
%of the signed equilibrium measure $\eta_{Q,\Sigma}$.
 Then,
\\[5pt]
(i) One has
\begin{equation}\label{signeddomin}
\mu_{Q,\Sigma} \leq \eta_{Q,\Sigma}^+,\quad\text{and, in particular, }\quad
\supp(\mu_{Q,\Sigma}) \subseteq \supp(\eta_{Q,\Sigma}^+).
\end{equation}
(ii) \textcolor{black}{Let $\Sigma_{1}$ be a closed subset of $\Sigma$ that admits a signed equilibrium measure $\eta_{Q,\Sigma_1}$} for which $\supp(\mu_{Q,\Sigma})\subset\Sigma_{1}$. If $\eta_{Q,\Sigma_{1}}$ is a positive measure, then $\mu_{Q,\Sigma}=\eta_{Q,\Sigma_{1}}$.
\\[5pt]
(iii) Let $\Sigma=\R^{d}$ and let $Q$ be a radial external field such that $\eta_{Q,\Sigma}$ is negative near infinity. Assume that there exists an $R_{0}>0$ such that, for each $R>R_{0}$, the signed equilibrium measure $\eta_{Q,R}$ is negative on 
its boundary $S_{R}$, i.e.\ $\eta_{Q,R}(S_{R})<0$.
 Then $\supp(\mu_Q)\subset B_{R_{0}}$.
 \\[5pt]
(iv) Let $\Sigma=\R^{d}$ and let $Q$ be a radial external field such that $\eta_{Q,\Sigma}$ is negative near the origin. Assume that there exists an $R_{0}>0$ such that, for each $R<R_{0}$, the signed equilibrium measure $\eta_{Q,E_R}$ is negative on 
its boundary $S_{R}$, i.e.\ $\eta_{Q,E_R}(S_{R})<0$.
 Then $\supp(\mu_Q)\subset E_{R_{0}}$.
\end{proposition}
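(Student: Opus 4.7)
Items (iii) and (iv) both proceed by combining the domination statement (i) applied once globally (to $\Sigma = \R^d$) with a second application on a carefully chosen restricted ambient set, exploiting the radial symmetry of the problem. The global application localizes $\supp(\mu_Q)$ in space; the local one rules out any extremal radius lying strictly outside the claimed confinement region.

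\textbf{Proof of (iii).} First apply (i) with $\Sigma = \R^d$: since $\eta_{Q, \R^d}$ is non-positive outside some bounded set, $\eta_{Q, \R^d}^+$ has bounded support, hence so does $\mu_Q$. Define $R^* := \max\{|x| : x \in \supp(\mu_Q)\}$, which is attained by compactness. Arguing by contradiction, assume $R^* > R_0$. Since $\mu_Q$ is a probability measure supported in $B_{R^*}$ that minimizes $I^Q$ over all measures on $\R^d$, it also minimizes $I^Q$ over probability measures on $B_{R^*}$; uniqueness of the equilibrium then gives $\mu_Q = \mu_{Q,R^*}$. By radial symmetry of $Q$ and of $B_{R^*}$, together with uniqueness of the signed equilibrium, $\eta_{Q,R^*}$ is itself radial, so its restriction to $S_{R^*}$ equals $\eta_{Q,R^*}(S_{R^*}) \, d\tilde\sigma_{R^*}$. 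The hypothesis $\eta_{Q,R^*}(S_{R^*}) < 0$ therefore forces $(\eta_{Q,R^*})^+$ to vanish on $S_{R^*}$, and (i) applied on $B_{R^*}$ gives $\mu_Q(S_{R^*}) = \mu_{Q,R^*}(S_{R^*}) = 0$. Hence $\supp(\mu_Q) \subset B_{R^*}^o$, which contradicts the attainment of the maximum $R^*$ by some $x_0 \in \supp(\mu_Q)$ with $|x_0|=R^*$. Consequently $R^* \leq R_0$, i.e.\ $\supp(\mu_Q) \subset B_{R_0}$.

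\textbf{Proof of (iv) and main obstacle.} The argument is entirely dual. The hypothesis that $\eta_{Q,\R^d}$ is negative near $0$ gives, via (i), $\supp(\mu_Q) \subset E_{\tilde R}$ for some $\tilde R > 0$. Define $R^{**} := \inf\{|x| : x \in \supp(\mu_Q)\}$, attained because $\{x \in \supp(\mu_Q) : |x| \leq R^{**}+1\}$ is compact. Assuming $R^{**} < R_0$ gives $\supp(\mu_Q) \subset E_{R^{**}}$, whence $\mu_Q = \mu_{Q, E_{R^{**}}}$ by the same energy-comparison; radial symmetry of $\eta_{Q,E_{R^{**}}}$ together with $\eta_{Q,E_{R^{**}}}(S_{R^{**}}) < 0$ forces $\supp(\mu_Q) \subset E_{R^{**}}^o = \{|x|>R^{**}\}$, contradicting the attainment of $R^{**}$. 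The delicate step in both parts is the identification $\mu_Q = \mu_{Q,\Sigma'}$ for the restricted ambient $\Sigma'$: it rests on having $\supp(\mu_Q) \subset \Sigma'$, which is exactly what the attainment of the radial extremum supplies at the critical radius. Everything else is bookkeeping: once ``negative total mass on the sphere'' is converted via radial symmetry into ``positive part of $\eta_{Q,\Sigma'}$ vanishes on the sphere'', (i) finishes the argument.
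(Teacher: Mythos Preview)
Your argument contains a genuine gap at the final step of both (iii) and (iv). From $\mu_Q(S_{R^*})=0$ you conclude $\supp(\mu_Q)\subset B_{R^*}^o$, but zero mass on a set does \emph{not} exclude that set from the support. For instance, if $\mu_Q$ has a continuous radial density that is strictly positive on $(R^*-\delta,R^*)$ and vanishes for $|x|>R^*$, then $\mu_Q(S_{R^*})=0$ while $S_{R^*}\subset\supp(\mu_Q)$. The support inclusion in (i), $\supp(\mu_Q)\subset\supp(\eta_{Q,R^*}^+)$, does not help either: the positive part $\eta_{Q,R^*}^+$ may well have $S_{R^*}$ in its support (as the boundary of the set where its continuous density is positive), even though $\eta_{Q,R^*}^+(S_{R^*})=0$.

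The paper's proof confronts exactly this difficulty. It reaches the same point $\mu_Q(S_R)=0$, but then observes that, since $S_R\subset\supp(\mu_Q)$ and $\mu_Q$ is radial, there must exist $R'\in(R_0,R)$ with $S_{R'}\subset\supp(\mu_Q)$. The crucial extra idea is to compute $\mu_{Q,R'}$ explicitly via balayage:
\[
\mu_{Q,R'}=Bal(\mu_Q,B_{R'})+(1-\|Bal(\mu_Q,B_{R'})\|)\tilde\sigma_{R'}.
\]
Because $\mu_Q$ has mass strictly outside $B_{R'}$, the balayage sweeps positive mass onto $S_{R'}$ (and for $d\geq3$ the mass-loss correction term is also strictly positive), forcing $\mu_{Q,R'}(S_{R'})>0$. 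This now genuinely contradicts $\eta_{Q,R'}^+(S_{R'})=0$ via (i). The same balayage device (this time without mass loss) handles (iv). Your framework is correct up to the point $\mu_Q(S_{R^*})=0$; what is missing is precisely this passage to a nearby radius $R'$ and the balayage computation that turns ``mass exists outside $B_{R'}$'' into ``$\mu_{Q,R'}$ has positive mass on $S_{R'}$''.
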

\begin{proof}
For a proof of i) and ii) see \cite[Lemma 3.15]{DOSW1}.\\
(iii): In view of (ii), the support of $\mu_{Q}$ is a compact set. Assume its outer boundary is a sphere $S_{R}$ with $R_{0}<R$. Then, $\mu_{Q}=\mu_{Q,{R}}\leq\eta_{Q,{R}}^{+}$ and thus $\mu_{Q}(S_{R})=0$ since $\eta_{Q,{R}}(S_{R})<0$. Since $\mu_{Q}(V)>0$ for any neighborhood $V$ of $S_{R}$, there exists $R_{0}<R'<R$ such that $S_{R'}\subset\supp\mu_{Q}$. Then,
$$
\mu_{Q,R'}=Bal(\mu_{Q},B_{R'})+(1-\|Bal(\mu_{Q},B_{R'})\|)\tilde\sigma_{R'}
$$
since $\supp Bal(\mu_{Q},B_{R'})=\supp\mu_{Q}\cap B_{R'}$ and thus $Bal(\mu_{Q},B_{R'})$ satisfies the Frostman inequalities (\ref{Frost1})-(\ref{Frost2}) on $B_{R'}$. Note that $\|Bal(\mu_{Q},B_{R'})\|<1$ and thus the second term in the right-hand side of the above equation is nonzero. Then, 
\begin{equation}\label{Bal1-R'}
0<\mu_{Q,R'}(S_{R'})\leq\eta_{Q,{R'}}^{+}(S_{R'})=0,
\end{equation}
where the last equality comes from the assumption since $R_{0}<R'$. We get a contradiction which shows that 
$\supp\mu_{Q}\subset B_{R_{0}}$. 
\\
(iv): Now, in view of (ii), the support of $\mu_{Q}$ does not contain the origin. Assume its inner boundary is a sphere $S_{R}$ with $R<R_{0}$. Then, $\mu_{Q}=\mu_{Q,E_R}\leq\eta_{Q,E_R}^{+}$ and thus $\mu_{Q}(S_{R})=0$ since $\eta_{Q,E_R}(S_{R})<0$. Since $\mu_{Q}(V)>0$ for any neighborhood $V$ of $S_{R}$, there exists $R<R'<R_{0}$ such that $S_{R'}\subset\supp\mu_{Q}$. Then,
\begin{equation}\label{Bal2-R'}
\mu_{Q,E_{R'}}=Bal(\mu_{Q},E_{R'})%+(1-\|Bal(\mu_{Q},B_{R'})\|)\tilde\sigma_{R'}
\end{equation}
since $\supp Bal(\mu_{Q},E_{R'})=\supp\mu_{Q}\cap E_{R'}$ and thus $Bal(\mu_{Q},E_{R'})$ satisfies the Frostman inequalities on $E_{R'}$. Note that (\ref{Bal2-R'}) does not involve a correction term as in (\ref{Bal1-R'}) since $\mu_{Q}$ is now swept out on $E_{R'}$ which contains a neighborhood of infinity, and thus its balayage has no mass loss.
%$\|Bal(\mu_{Q},B_{R'})\|<1$ and thus the second term in the right-hand side of the above equation is nonzero. 
Then, 
$$
0<\mu_{Q,E_{R'}}(S_{R'})\leq\eta_{Q,E_{R'}}^{+}(S_{R'})=0,
$$ 
where the first inequality comes from the fact that $\mu_{Q}$ has some mass outside of $E_{R'}$ which is swept out on $S_{R'}$, and the last equality comes from the assumption since $R'<R_{0}$. We get a contradiction which shows that 
$\supp\mu_{Q}\subset E_{R_{0}}$. 
%Item (iv) can be proved as (iii) in \cite[Lemma 3.15]{DOSW1}. For completeness, we do the proof.
%Assume $S_{\mu_Q}$ is not included in $E_{R_{0}}$. Consider the largest ball $B_{R}$, $R<R_{0}$, such that $S_{\mu_Q}\subset\tilde B_{R}$. Since $\mu_{Q}$ satisfies the Frostman inequalities %(\ref{Frostman3})--(\ref{Frostman4}) 
%on $\tilde B_{R}$, it holds that $\mu_{Q}=\mu_{Q,\tilde B_{R}}$, the weighted equilibrium measure of $\tilde B_{R}$, see e.g.\ \cite[Theorem 2.1]{DOSW1}. Now pick some $x$ in $S_{R}\cap S_{\mu_Q}$ (which equals $S_{R}$ by radial symmetry). In a small neighborhood $V$ of $x$, we have $\eta_{Q,\tilde B_{R}}(V)<0$ while $\mu_{Q,\tilde B_{R}}(V)>0$ which contradicts (\ref{signeddomin}).
\end{proof}
We will need an expression for the energy $W(S^{d})$ of the unit sphere $S^{d}\subset\R^{d+1}$ (i.e.\ the energy of its equilibrium measure) for the kernel $1/|x|^{d-2}$. According to \cite[Formula (4.6.5)]{BHS}, %we have
$$
W(S^{d})=\frac{2}{\sqrt{\pi}}\frac{\Gamma((d+1)/2)}{\Gamma(d/2+1)}=\frac{4}{d\sqrt{\pi}}\frac{\Gamma((d+1)/2)}{\Gamma(d/2)}
=\frac{4|S^{d-1}|}{d|S^{d}|}.
$$
Next, we recall, for the particular case of Coulomb interaction and $d>2$, an explicit formula for the balayage of a Dirac mass $\delta_{y}$, $y\in\R^{d+1}\setminus\R^{d}$, onto $\R^{d}$, that was given in \cite[Lemma 3.11]{DOSW1} for the more general case of Riesz potentials.
\begin{lemma}\label{lem:balayage}
Let $y= (y_1,\ldots,y_{d+1}) \in \R^{d+1} \setminus \R^d$, $d>2$, with $y_{d+1}\not= 0$. Define
the weak balayage $Bal(\delta_{y},\R^{d})$ of $\delta_{y}$ onto $\R^d$ as the measure of mass 1, given by
\begin{equation}\label{pointbalay}
dBal({\delta}_{y},\R^{d}) (x) = \frac{(2|y_{d+1}|)^{2}}{|S^{d}|W(S^{d}) |x - y|^{d+2}}dx=
\frac{d|y_{d+1}|^{2}}{|S^{d-1}| |x - y|^{d+2}}dx,
\end{equation}
where $dx$ denotes the Lebesgue measure in $\R^d$. Then $U^{Bal({\delta}_{y},\R^{d})}=U^{\delta_{y}}$ on $\R^{d}$.
\end{lemma}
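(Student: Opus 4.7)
The strategy is to exhibit the positive measure $\nu$ on $\R^{d}$ whose density is given in (\ref{pointbalay}) as a probability measure whose Coulomb potential coincides with that of $\delta_{y}$ throughout $\R^{d}$, and to conclude that $\nu = Bal(\delta_{y},\R^{d})$ by uniqueness of the balayage in $\R^{d+1}$ (with respect to the kernel $1/|\cdot|^{d-2}$) onto the closed subset $\R^{d}$.

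By translating along $\R^{d}$ and reflecting across $\R^{d}$, one may assume $y=(0,\dots,0,h)$ with $h=|y_{d+1}|>0$, so that $|x-y|^{2}=|x|^{2}+h^{2}$ for $x\in\R^{d}$ and the proposed density reduces to $dh^{2}/(|S^{d-1}|(|x|^{2}+h^{2})^{(d+2)/2})$. The total mass computation then reduces, via spherical coordinates, to the one-variable integral
\[
dh^{2}\int_{0}^{\infty}\frac{r^{d-1}}{(r^{2}+h^{2})^{(d+2)/2}}\,dr = 1,
\]
which follows from the Beta-function evaluation $B(d/2,1)=2/d$ (equivalently, the substitution $r=h\tan\theta$). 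The equivalence of the two forms of the density in (\ref{pointbalay}) is exactly the identity $W(S^{d})=4|S^{d-1}|/(d|S^{d}|)$ recalled just above the lemma.

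The substantive step is the potential identity $U^{\nu}(x)=(|x|^{2}+h^{2})^{-(d-2)/2}=U^{\delta_{y}}(x)$ for every $x\in\R^{d}$. Using rotational symmetry about the axis through $y$, this reduces to a one-parameter identity in $|x|$, which I would establish by Fourier analysis on $\R^{d}$: the Fourier transform of $|x|^{-(d-2)}$ is proportional to $|\xi|^{-2}$, and that of $(|x|^{2}+h^{2})^{-(d+2)/2}$ is an explicit Bessel function $K_{(d+2)/2-d/2}(h|\xi|)$ times a power of $|\xi|$, so that their product matches the Fourier transform of $(|x|^{2}+h^{2})^{-(d-2)/2}$. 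A more elementary alternative is to differentiate both sides in $h$ (under the integral on the left) and observe that the two functions of $h$ satisfy the same first-order ODE with matching limits as $h\downarrow 0$ and $h\to\infty$. Since both sides are continuous on $\R^{d}$, the quasi-everywhere equality demanded by the Frostman characterization is upgraded to pointwise equality, and uniqueness of the balayage in $\R^{d+1}$ onto $\R^{d}$ yields $\nu = Bal(\delta_{y},\R^{d})$.

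The main obstacle is precisely the potential identity: whereas the mass computation is a standard one-dimensional Beta integral, evaluating the $d$-dimensional convolution of $|x|^{-(d-2)}$ with the candidate density is not routine and requires either the Fourier-analytic manipulation sketched above or the differentiation-in-$h$ reduction to an ODE. Once this identity is secured, positivity, support, and uniqueness all follow from the general balayage framework recalled around (\ref{balayage}) and from Proposition \ref{prop-eta}.
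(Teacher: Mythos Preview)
The paper does not actually prove this lemma: it is stated as a recollection of \cite[Lemma 3.11]{DOSW1}, where it is established for general Riesz kernels, and no argument is given here. Your outline therefore goes beyond what the paper does, and the two routes you propose for the potential identity are both sound. The Fourier computation is correct in spirit: in $\R^{d}$ the transform of $(|x|^{2}+h^{2})^{-s}$ is a power of $|\xi|$ times $K_{s-d/2}(h|\xi|)$, so the cases $s=(d+2)/2$ and $s=(d-2)/2$ both produce the Bessel factor $K_{1}(h|\xi|)$ and differ only by $|\xi|^{2}$, which is exactly absorbed by the Fourier multiplier of $|x|^{-(d-2)}$. An equally direct alternative, parallel to the paper's proof of the $d=2$ companion lemma, is to integrate radially using $U^{\tilde\sigma_{r}}(x)=\min(r,|x|)^{2-d}$ and evaluate the resulting one-variable integrals.

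One small correction: read the statement again --- the lemma \emph{defines} the weak balayage to be the measure in (\ref{pointbalay}) and then asserts the potential identity. There is no uniqueness claim to establish, and the general balayage framework around (\ref{balayage}) is phrased for the Coulomb kernel of the ambient space, which here would be $1/|x|^{d-1}$ in $\R^{d+1}$, not $1/|x|^{d-2}$; this is precisely why the paper calls it a ``weak'' balayage. So your concluding appeal to uniqueness, and especially to Proposition \ref{prop-eta} (which concerns signed equilibrium measures, not balayage), is unnecessary and should be dropped. What remains to verify is exactly the three items you identified: the equivalence of the two density expressions, total mass $1$, and $U^{\nu}=U^{\delta_{y}}$ on $\R^{d}$.
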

Note that the ambient space in the above lemma is $\R^{d+1}$ for which the kernel $1/|x|^{d-2}$, we are interested in, is not the Coulomb kernel. This is why we just call $Bal({\delta}_{y},\R^{d})$ a weak balayage.

We complement this lemma with the case $d=2$ and the log kernel.
\begin{lemma}
Let $y= (y_1,y_{2},y_{3}) \in \R^{3} \setminus \R^2$ with $y_{3}\not= 0$.
The weak balayage $Bal(\delta_{y},\R^{2})$ of $\delta_{y}$ onto $\R^2$ is a measure of mass 1, given by
\begin{equation}\label{pointbalay-log}
dBal({\delta}_{y},\R^{2}) (x) = \frac{|y_{3}|^{2}}{|x - y|^{4}}\frac{dx}{\pi}.
\end{equation}
\end{lemma}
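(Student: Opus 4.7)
The plan is to verify directly that the measure $\mu := Bal(\delta_y, \R^2)$ defined in the statement is a probability measure on $\R^2$ and that its logarithmic potential satisfies $U^\mu(x) = \log(1/|x-y|)$ for every $x \in \R^2$ --- the natural $d=2$ analogue of the identity $U^{Bal(\delta_y,\R^d)} = U^{\delta_y}$ proved in Lemma \ref{lem:balayage} for $d \geq 3$. The key preliminary reduction is to exploit translation invariance in $\R^2$ to assume that the projection of $y$ onto $\R^2$ is the origin, so that $y = (0, 0, a)$ with $a = |y_3|$; then $|t - y|^2 = |t|^2 + a^2$ for every $t \in \R^2$, and a passage to polar coordinates together with the substitution $u = s^2 + a^2$ immediately yields total mass $1$.

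For the potential, rotational symmetry in $\R^2$ reduces the problem to a radial computation: $U^\mu$ depends only on $r = |x|$. Taking $x = (r, 0)$ and $t = s(\cos\phi, \sin\phi)$, the angular integration is handled by the classical identity
\[
\int_0^{2\pi} \log\bigl|r e^{i\phi} - s\bigr|\, d\phi = 2\pi \log\max(r, s),
\]
which reduces $U^\mu(r)$ to the one-dimensional integral
\[
-2\int_0^\infty \frac{a^2\, s\, \log\max(r, s)}{(s^2+a^2)^2}\, ds.
\]
Splitting the integration at $s = r$, the contribution from $[0, r]$ is evaluated by the substitution $u = s^2 + a^2$, while the contribution from $[r, \infty)$ is tackled by integration by parts with $u = \log s$ and $dv = a^2 s\, ds/(s^2+a^2)^2$, the leftover integral being handled by a partial-fractions decomposition of $1/(s(s^2+a^2))$.

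Combining the pieces, I expect the $\log r$ terms to cancel and yield $U^\mu(r) = -\tfrac{1}{2}\log(r^2 + a^2) = \log(1/|x-y|)$, which is the desired matching identity with no additive constant. The only obstacle is the bookkeeping in these cancellations; there is no conceptual subtlety. As an alternative, one could pass formally to the limit $d \downarrow 2$ in the $d \geq 3$ formula of Lemma \ref{lem:balayage}: its density $d\,|y_{d+1}|^2 / (|S^{d-1}|\,|x-y|^{d+2})$, evaluated at $d = 2$, already equals $|y_3|^2/(\pi |x-y|^4)$, which strongly suggests the result and could be made rigorous by a limiting argument on the corresponding Riesz potentials; however, the direct computation outlined above is shorter and self-contained.
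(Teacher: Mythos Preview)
Your proposal is correct and follows essentially the same route as the paper: reduce by symmetry to $y=(0,0,a)$ and a radial point $x=(R,0)$, use the circle-averaging identity $\int_0^{2\pi}\log|Re^{i\phi}-s|\,d\phi=2\pi\log\max(R,s)$ (equivalently, the known potential $U^{\tilde\sigma_s}(R)=-\log\max(R,s)$), split the resulting one-dimensional integral at $s=R$, and evaluate each piece by elementary means to obtain $-\tfrac12\log(R^2+a^2)$. The only additions are your explicit mass-1 check and the remark on the $d\downarrow2$ limit, neither of which appears in the paper's proof.
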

Note that (\ref{pointbalay-log}) is consistent with (\ref{pointbalay}).
\begin{proof}
We prove that the potentials of $\delta_{y}$ and $Bal({\delta}_{y},\R^{2})$ coincide on $\R^{2}$. Assume $y=(0,0,1)$ for simplicity. From the radial symmetry, it is sufficent to estimate the potential of the balayage at some $(R,0)\in\R^{2}$. We get
$$
-\int_{\R^{2}}\frac{\log|(R,0)-t|}{(1+|t|^{2})^{2}}\frac{dt}{\pi}=
2\int_{r=0}^{\infty}\frac{rU^{\tilde\sigma_{r}}(R)}{(1+r^{2})^{2}}{dr}.
$$
Recalling that
$$
U^{\tilde\sigma_{r}}(z)=\log1/|z|\quad  \text{if }|z|> r,\qquad
U^{\tilde\sigma_{r}}(z)=\log1/r\quad  \text{if }|z|\leq r,
$$
we obtain
\begin{align*}
2\int_{r=0}^{R} & \frac{r\log(1/R)}{(1+r^{2})^{2}}{dr}
+2\int_{r=R}^{\infty}\frac{r\log(1/r)}{(1+r^{2})^{2}}{dr}
=-\frac{R^{2}\log(R)}{(1+R^{2})}-\frac{\log(R)}{(1+R^{2})}
-\frac{1}{2}\log(1+1/R^{2})
\\[5pt]
& =\frac12\log\frac{1}{1+R^{2}}=U^{\delta_{y}}(R). \qedhere
\end{align*}
\end{proof}
We also need a formula for the balayage of a point mass in $\R^{d}$ onto a ball. 
%*** FIND A REF ? ***
\begin{lemma}
Let $d\geq2$ and $u\in\R^{d}$ with $|u|\neq R$. Then
\begin{align*}
Bal(\delta_{u},B_{R}^{c})(s)=\frac{(R^{2}-|u|^{2})d\sigma_{R}(s)}{R|u-s|^{d}|S^{d-1}|},\qquad|u|<R
,\\[5pt]
Bal(\delta_{u},B_{R})(s)=\frac{(|u|^{2}-R^{2})d\sigma_{R}(s)}{R|u-s|^{d}|S^{d-1}|},\qquad|u|>R.
\end{align*}
%with mass 
%$$
%\frac{|u|^{2}-R^{2}}{|S^{d-1}|R}\int_{S_{R}^{d-1}}\frac{d\sigma(s)}{|u-s|^{d}}
%=\frac{R^{d-2}}{|u|^{d-2}},
%$$
%where  and $|S^{d-1}|=\|d\sigma\|$ denotes the surface of the $(d-1)$-dimensional unit sphere.
\end{lemma}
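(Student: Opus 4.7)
The plan is to identify the two candidate densities as the interior Poisson kernel $P_R^{\mathrm{int}}(u,\cdot)$ (first case, $|u|<R$) and the exterior Poisson kernel $P_R^{\mathrm{ext}}(u,\cdot)$ (second case, $|u|>R$) of the ball $B_R$ with pole at $u$, and then to verify the balayage characterization via the Poisson integral representation of harmonic functions.

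First, I would establish the potential identity on the ``good'' side of $S_R$. For the first formula, fix $x\in B_R^c$; then the map $u\mapsto U^{\delta_u}(x)$ (equal to $1/|x-u|^{d-2}$ for $d\geq 3$, or $\log(1/|x-u|)$ for $d=2$) is harmonic in $u$ on $B_R$, as its only singularity lies at $u=x\notin B_R$. Poisson's integral formula on the ball then yields
\[
U^{\delta_u}(x)=\int_{S_R}\frac{R^{2}-|u|^{2}}{R\,|u-s|^{d}\,|S^{d-1}|}\,U^{\delta_s}(x)\,d\sigma_R(s),
\]
which is exactly $U^{Bal(\delta_u,B_R^c)}(x)$; the same formula applied to the constant $h\equiv 1$ gives total mass $1$, consistent with $B_R^c$ having bounded complement.

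For the second formula, fix $x\in B_R$. When $d\geq 3$, the map $u\mapsto 1/|x-u|^{d-2}$ is harmonic on $\{|u|>R\}$ and vanishes at infinity, so the exterior Poisson representation applies without any correction at infinity and gives $U^\mu(x)=U^{\delta_u}(x)$; the associated mass is then $(R/|u|)^{d-2}<1$, reflecting the mass loss inherent to balayage onto a bounded set in dimension $d\geq 3$. The main obstacle is the logarithmic case $d=2$, where $\log(1/|x-u|)$ is unbounded at infinity. I would resolve this by applying the exterior Poisson formula to the auxiliary function $u\mapsto \log(1/|x-u|)+\log|u|$, which is bounded harmonic on $\{|u|>R\}$ with limit $0$ at infinity; using $|s|=R$ on $S_R$ together with the fact that mass is preserved ($=1$) in $d=2$ yields $U^\mu(x)=U^{\delta_u}(x)+\log(|u|/R)$, the additive constant being exactly the one permitted by the preamble when $d=2$ and $F$ is bounded.

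Finally, the remaining condition $U^\mu(x)\leq U^{\delta_u}(x)+C$ q.e.\ on $\R^d$ from the definition of balayage follows by a standard maximum principle: on the component of $\R^d\setminus S_R$ containing $u$, the function $U^\mu-U^{\delta_u}-C$ is subharmonic, vanishes on $S_R$, and tends to $-\infty$ at $u$ (since $U^{\delta_u}$ has a pole there while $U^\mu$ is harmonic on that component), so it is $\leq 0$ throughout; combined with the equality already established on the other component, this yields the global inequality.
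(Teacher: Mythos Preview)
Your proof is correct. For the first formula ($|u|<R$) your argument coincides with the paper's: both apply the interior Poisson integral to the harmonic function $u\mapsto U^{\delta_u}(x)$ for fixed $x$ outside $B_R$. For the second formula ($|u|>R$) the approaches diverge: the paper simply reduces to the first case via the Kelvin transform $x\mapsto (R^2/|x|^2)x$, whereas you invoke the exterior Poisson representation directly, handling the unbounded logarithmic case $d=2$ by subtracting off $\log(1/|u|)$ to obtain a bounded harmonic function. Your route is a bit longer but has the advantage of making the additive constant $C=\log(|u|/R)$ in the $d=2$ balayage explicit, and of not requiring the reader to track how the Kelvin transform interacts with the kernel in each dimension. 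You also supply the maximum-principle verification of the global inequality $U^\mu\leq U^{\delta_u}+C$, which the paper omits; note that in the exterior case this step needs the observation that $U^\mu-U^{\delta_u}$ stays bounded near infinity (the growths cancel), so that the Phragm\'en--Lindel\"of form of the maximum principle applies.
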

\begin{proof}
The expression for $Bal(\delta_{u},B_{R}^{c})$ when $|u|<R$ follows from Poisson's formula in $B_{R}$. Indeed, for $|x|>R$,
$$
\frac{1}{|x-u|^{d-2}}=\int\frac{1}{|x-s|^{d-2}}\frac{R^{2}-|u|^{2}}{R|u-s|^{d}}\frac{d\sigma_{R}(s)}{|S^{d-1}|}.
$$
The expression when $|u|>R$ can be derived by making use of the Kelvin transform $x\mapsto(R^{2}/|x|^{2})x$.
\end{proof}
\begin{lemma}\label{int-d}
Let $d\geq2$. Then
$$
\frac{1}{|S^{d-1}|}\int_{S^{d-1}}\frac{d\sigma(s)}{|s-Re_{1}|^{d}}=
\begin{cases}
1/[R^{d-2}(R^{2}-1)]\quad & \text{ if }R>1,
\\[5pt]
1/(1-R^{2})\quad & \text{ if }R<1.
\end{cases}
$$
\end{lemma}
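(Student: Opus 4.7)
The plan is to evaluate the two integrals by identifying them with the total mass of certain explicit balayage measures. The previous lemma supplies, for $|u| \ne R$ and $d\geq 2$, explicit densities for $Bal(\delta_u,B_R^{c})$ and $Bal(\delta_u,B_R)$; specialising to a unit ball and $u = Re_{1}$, each of those densities is exactly a constant multiple of $d\sigma(s)/|s-Re_{1}|^{d}$. Hence, integrating the density over $S^{d-1}$ and dividing by $|S^{d-1}|$ reduces the lemma to computing the total mass of the balayage in each case.

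For $R>1$, I would use $Bal(\delta_{Re_{1}},B_{1})$, whose density is $(R^{2}-1)d\sigma(s)/(|Re_{1}-s|^{d}|S^{d-1}|)$. Its total mass $M$ can be evaluated by testing the defining balayage identity $U^{Bal}(x)=U^{\delta_{Re_{1}}}(x)$ (q.e.\ on $B_{1}$) at the centre $x=0$: since $Bal$ is supported on $S^{d-1}$, the left side is $M$ when $d\geq 3$, while the right side is $1/R^{d-2}$. This gives $M = 1/R^{d-2}$, from which the formula $\frac{1}{|S^{d-1}|}\int d\sigma(s)/|Re_{1}-s|^{d} = 1/[R^{d-2}(R^{2}-1)]$ follows. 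For $d=2$ the balayage preserves total mass, so $M=1$, and this is again consistent with the stated formula (which reduces to $1/(R^{2}-1)$ when $d=2$).

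For $R<1$, I would instead use $Bal(\delta_{Re_{1}},B_{1}^{c})$, whose density is $(1-R^{2})d\sigma(s)/(|Re_{1}-s|^{d}|S^{d-1}|)$. Here $B_{1}^{c}$ is unbounded and its complement is bounded, so no mass loss occurs: $M=1$ in every dimension $d\geq 2$. Integrating the density then yields $\frac{1}{|S^{d-1}|}\int d\sigma(s)/|Re_{1}-s|^{d} = 1/(1-R^{2})$.

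The only mildly delicate step is the mass computation for $R>1$ in higher dimensions, where one must carefully use the pointwise balayage identity (which holds on $B_{1}$, not merely outside). The case $d=2$ requires a separate handling since the balayage identity on a bounded set then carries an additive constant, but one can bypass this by invoking mass preservation directly. Once these two mass calculations are done, the remaining manipulations are purely algebraic.
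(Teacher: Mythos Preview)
Your argument is correct and genuinely different from the paper's. The paper proceeds by direct computation: for $d=2$ it invokes Cauchy's formula; for $d\geq 3$ and $R>1$ it starts from the mean-value identity $\frac{1}{|S^{d-1}|}\int_{S^{d-1}}|s-Re_{1}|^{2-d}\,d\sigma(s)=R^{2-d}$, differentiates in $R$ to obtain a linear relation between $I$ and an auxiliary integral $J$ involving $\langle s,e_{1}\rangle$, combines this with the algebraic identity $(1+R^{2})I-2RJ=R^{2-d}$, and solves; for $R<1$ it applies the Kelvin transform. Your route instead recycles the explicit balayage densities from the preceding lemma and reduces everything to a total-mass computation: for $R>1$ you read off the mass of $Bal(\delta_{Re_{1}},B_{1})$ either by evaluating the balayage potential identity at the origin ($d\geq 3$) or by mass preservation ($d=2$), and for $R<1$ you use that the Poisson kernel has unit mass. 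This is cleaner and makes better internal use of the paper's own machinery; the paper's computation, on the other hand, is self-contained and does not rely on the balayage lemma, so the two lemmas remain logically independent there. One small point worth making explicit in your write-up: when you evaluate the balayage identity at $x=0$ for $d\geq 3$, the ``q.e.'' can be upgraded to ``everywhere on $B_{1}$'' since both potentials are continuous (indeed harmonic) in the open ball, so the evaluation at the single point $0$ is legitimate.
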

\begin{proof}
When $d=2$, the integral is easily computed by applying Cauchy's formula.\\
When $d\geq3$, denote by $I$ the left-hand side expression to be computed.
Assume first $R>1$. By harmonicity of $s\mapsto|s-Re_{1}|^{2-d}$, 
$$\frac{1}{|S^{d-1}|}\int_{S^{d-1}}\frac{d\sigma(s)}{|s-Re_{1}|^{d-2}}=R^{2-d},$$ 
and, by differentiating with respect to $R$, we get
\begin{multline*}
(2-d)R^{1-d}=\frac{2-d}{|S^{d-1}|}\int_{S^{d-1}}\frac{(R-<s,e_{1}>)d\sigma(s)}{|s-Re_{1}|^{d}}
=(2-d)(RI-J),\\[5pt]
J:=\frac{1}{|S^{d-1}|}\int_{S^{d-1}}\frac{<s,e_{1}>d\sigma(s)}{|s-Re_{1}|^{d}}.
\end{multline*}
Moreover,
\begin{multline*}
(1+R^{2})I-2RJ=
\frac{1}{|S^{d-1}|}\int_{S^{d-1}}\frac{(1-2R<s,e_{1}>+R^{2})d\sigma(s)}{|s-Re_{1}|^{d}}\\[5pt]
=
\frac{1}{|S^{d-1}|}\int_{S^{d-1}}\frac{d\sigma(s)}{|s-Re_{1}|^{d-2}}=R^{2-d}.
\end{multline*}
Combining the two equations, we get
\begin{equation*}
(1-R^{2})I=R^{2-d}-2R^{2-d}=-R^{2-d} %\qquad\iff\qquad I=\frac{1}{R^{d-2}(R^{2}-1)} 
\end{equation*}
and the result follows. If $R<1$, we may use the Kelvin transform $K:x\mapsto x/|x|^{2}$, which satisfies $|x-y|=|K(x)-K(y)||x||y|$ and whose jacobian has determinant 1 in absolute value on $S^{d-1}$.
\end{proof}
%%%%%%%%%%%%%%%%
\section{Proofs}\label{Proofs}
This section is devoted to the proofs of the main results. As said above, our main tool is the signed equilibrium measure. We start with some results about it.

\begin{proposition}
The signed equilibrium measure $\eta_{R}$ for the ball $B_{R}$, $R>0$, is made of two components, a continuous part supported on $B_{R}$ and a singular part supported on $S_{R}$, namely
\begin{equation}\label{form-signed}
\eta_{R}(x)=\frac{dg_{c}(|x|)}{|S^{d-1}|}dx
+g_{s}(R)d\tilde\sigma_{R}(s),
\end{equation}
with
\begin{equation}\label{def-g-h}
g_{c}(r) = \frac{\gamma_2h_2^2}{(r^{2}+h_2^2)^{d/2+1}}-\frac{\gamma_1h_1^2}{(r^{2}+h_1^2)^{d/2+1}},
\qquad g_{s}(R)=1-\frac{\gamma_2R^{d}}{(R^{2}+h_2^2)^{d/2}}+\frac{\gamma_1R^{d}}{(R^{2}+h_1^2)^{d/2}},
\end{equation}
where the indices $c$ and $s$ in the functions $g_{c}$ and $g_{s}$ stand for ``continuous'' and ``singular''. Note that the density $g_{c}(r)$ is independent from $R$.
\end{proposition}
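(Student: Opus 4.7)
The plan is to verify directly that the candidate measure, call it $\eta_R^\ast$, fulfils the defining properties of a signed equilibrium on $B_R$: total mass $1$ and a constant weighted potential quasi-everywhere on $B_R$. By the uniqueness property recalled right after Definition~\ref{def-sign}, this identification forces $\eta_R=\eta_R^\ast$. The motivation for the ansatz comes from Lemma~\ref{lem:balayage} (together with its $d=2$ counterpart (\ref{pointbalay-log})): the continuous density $\tfrac{d\,g_c(|x|)}{|S^{d-1}|}\,dx$ is precisely the density of the signed measure
\[
\nu_c:=\gamma_2\,Bal(\delta_{H_2},\R^d)-\gamma_1\,Bal(\delta_{H_1},\R^d),
\]
whose potential on $\R^d$ equals $-Q(x)$ by the very definition of weak balayage.

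To check the Frostman-type identity (\ref{defsigned}), I fix $x\in B_R$ and split
\[
-Q(x)=U^{\nu_c}(x)=U^{\nu_c|_{B_R}}(x)+U^{\nu_c|_{E_R}}(x).
\]
The tail $U^{\nu_c|_{E_R}}(x)$ is the potential at an interior point of a radially symmetric density supported outside $B_R$, so by Newton's theorem (which gives $U^{\tilde\sigma_r}(x)=r^{2-d}$ for $|x|\le r$ in dimension $d\ge 3$, and $\log(1/r)$ when $d=2$), passage to polar coordinates reduces it to
\[
\int_R^\infty d\,g_c(r)\,r\,dr=\gamma_2\frac{h_2^2}{(R^2+h_2^2)^{d/2}}-\gamma_1\frac{h_1^2}{(R^2+h_1^2)^{d/2}},
\]
which does not depend on $x\in B_R$. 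Since the atomic component $g_s(R)\,d\tilde\sigma_R$ also has a constant potential on $B_R$, the sum $U^{\eta_R^\ast}(x)+Q(x)$ is indeed constant there.

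It remains to verify the normalization $\eta_R^\ast(\R^d)=1$. The spherical part contributes $g_s(R)$, so I need $\nu_c|_{B_R}(B_R)=1-g_s(R)$, i.e.\
\[
\int_0^R\frac{d\,h_i^2\,r^{d-1}}{(r^2+h_i^2)^{d/2+1}}\,dr=\frac{R^d}{(R^2+h_i^2)^{d/2}},\qquad i=1,2.
\]
The substitution $r=h_i\tan\theta$ transforms the integrand into $d\sin^{d-1}\theta\cos\theta$, whose antiderivative is $\sin^d\theta$; evaluated at $\theta=\arctan(R/h_i)$ this yields exactly the right-hand side. Combining with coefficients $\gamma_2$ and $-\gamma_1$ and comparing with the definition of $g_s(R)$ in (\ref{def-g-h}) gives $\nu_c|_{B_R}(B_R)=1-g_s(R)$, as required. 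The main bookkeeping obstacle is keeping track of the continuous bulk on $B_R$ alongside the atomic contribution on $S_R$ and handling the two kernels ($d\ge 3$ and $d=2$) in parallel; once the radial splitting above is in place, everything reduces to the elementary integrations indicated.
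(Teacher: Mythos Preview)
Your proof is correct and follows a genuinely different route from the paper's. The paper \emph{constructs} $\eta_R$ by computing $Bal(\delta_y,B_R)$ in two steps: first take $Bal(\delta_y,\R^d)$, then sweep its restriction to $B_R^c$ onto $S_R$ via the Poisson kernel (using Lemma~\ref{int-d} to evaluate the resulting spherical integral), and finally add the appropriate multiple of $\tilde\sigma_R$ to correct for the mass loss of balayage when $d\ge 3$. You instead take the formula as an ansatz and \emph{verify} the two defining conditions (unit mass and constant weighted potential on $B_R$) directly, invoking uniqueness. Your argument is more elementary in that it bypasses the Poisson-kernel balayage and Lemma~\ref{int-d}, using only Newton's shell theorem and a one-variable antiderivative; the price is that it does not by itself explain where the decomposition into bulk density plus boundary atom comes from, whereas the paper's balayage construction makes that structure transparent. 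One small point: the displayed value you give for $U^{\nu_c|_{E_R}}(x)$ is literally correct only for $d\ge 3$ (for $d=2$ the Newton kernel produces a $\log(1/r)$ factor rather than $r^{2-d}$), but since all you actually need is that this potential is constant on $B_R$, the argument goes through unchanged. You might also remark in one line that $\eta_R^\ast$ has finite energy (bounded density on a compact set plus a multiple of $\tilde\sigma_R$), as this is part of Definition~\ref{def-sign}.
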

\begin{proof}
%We first assume $d\geq3$.\\
1) The restriction of $Bal(\delta_{y},\R^{d})$ to $B_{R}$ is
$$
\frac{d|y_{d+1}|^{2}}{{|S^{d-1}|} |x - y|^{d+2}}dx.
%\quad\text{with mass}\quad
%\frac{(2|y_{d+1}|)^{2}}{{|S^{d}|}}\int_{B_{R}}\frac{dx}{ |x - y|^{d+2}}
%\left(=\frac{(2|y_{d+1}|)^{2}}{{|S^{d}|}}\right)
$$
2) The balayage of the restriction of $Bal(\delta_{y},\R^{d})$ to $B_{R}^{c}$ onto $B_{R}$ is supported onto $S_{R}^{d-1}$, and applying the \textit{superposition principle} \cite[Eq. (4.5.6)]{Lan}, it is equal to
\begin{equation}\label{bal-in}
\frac{d|y_{d+1}|^{2}}{R|S^{d-1}|} \left(\int_{B_{R}^{c}}\frac{1}{ |u - y|^{d+2}}
\frac{|u|^{2}-R^{2}}{|u-s|^{d}}du\right)\frac{d\sigma_{R}(s)}{|S^{d-1}|}.
\end{equation}
Because of radial symmetry the integral does not depend on $s$ and we may choose e.g. $s=Re_{1}$ with $e_{1}$ the first vector of the canonical basis of $\R^{d}$. Then, with Lemma \ref{int-d}, the integral becomes
\begin{align*}
\int_{\rho=R}^{\infty}\frac{(\rho^{2}-R^{2})\rho^{d-1}}{(\rho^{2}+y_{d+1}^{2})^{d/2+1}}
& \left(\int_{S^{d-1}}\frac{d\sigma(s)}{|\rho s-Re_{1}|^{d}}\right)d\rho
 =|S^{d-1}|\int_{\rho=R}^{\infty}\frac{(\rho^{2}-R^{2})\rho^{d-1}}
{(\rho^{2}+y_{d+1}^{2})^{d/2+1}\rho^{d-2}(\rho^{2}-R^{2})}
d\rho
\\[5pt]
& =|S^{d-1}|\int_{\rho=R}^{\infty}\frac{\rho}
{(\rho^{2}+y_{d+1}^{2})^{d/2+1}}
d\rho=\frac{|S^{d-1}|}{d(R^{2}+y_{d+1}^{2})^{d/2}}
\end{align*}
so that (\ref{bal-in}) equals
$$
\frac{|y_{d+1}|^{2}}{R|S^{d-1}|(R^{2}+y_{d+1}^{2})^{d/2}}d\sigma_{R}(s).
$$
\\
3) The mass of $Bal(\delta_{y},B_{R})$ is
$$
\frac{U^{\om_{B_{R}}}(y)}{W(B_{R})}=\frac{R^{d-2}}{R^{d-1}|S^{d-1}|}\int_{S_{R}^{d-1}}\frac{d\sigma_{R}(x)}{(|x|^{2}+y_{d+1}^{2})^{d/2-1}}
=\frac{R^{d-2}}{(R^{2}+y_{d+1}^{2})^{d/2-1}}.
$$
\\
Hence, taking into account the above-mentioned mass loss of the balayage for Riesz potentials, the signed equilibrium measure on $B_{R}$ corresponding to a Dirac mass $\gamma$ at $y=(0,y_{d+1})$ equals
\begin{multline*}
-\frac{d|y_{d+1}|^{2}\gamma}{{|S^{d-1}|} |x - y|^{d+2}}dx
-\frac{|y_{d+1}|^{2}\gamma}{R|S^{d-1}|(R^{2}+y_{d+1}^{2})^{d/2}}d\sigma_{R}(s)
  \\
+\left(1
%- \int_{B_{R}^{c}}\frac{(2|y_{d+1}|)^{2}}{{|S^{d}|} |u - y|^{d+2}}\frac{R^{d-2}}{|u|^{d-2}}du
  +\frac{\gamma R^{d-2}}{(R^{2}+y_{d+1}^{2})^{d/2-1}}\right)\frac{d\sigma_{R}(s)}{R^{d-1}|S^{d-1}|}.
\end{multline*}
In terms of $d\tilde\sigma_{R}$, we get
$$
-\frac{dy_{d+1}^{2}\gamma}{{|S^{d-1}|} (|x|^{2} +y_{d+1}^{2})^{d/2+1}}dx
+\left(1
  +\frac{\gamma R^{d}}{(R^{2}+y_{d+1}^{2})^{d/2}}\right)d\tilde\sigma_{R}(s).
$$
The formula (\ref{form-signed}) for the signed equilibrium measure corresponding to the pair of charges $(\gamma_{1},-\gamma_{2})$, respectively at heights $h_{1}$ and $h_{2}$ above $\R^{d}$, follows immediately.
%{\bf CASE $d=2$ : Same formula ?}
\end{proof}
To continue our analysis, we need a few properties of the functions $g_{c}$ and $g_{s}$ introduced in (\ref{def-g-h}).
\begin{lemma}\label{lem-gc-gs}
The functions $g_{c}$ and $g_{s}$ satisfy the following properties:\\
1) When $r=0$,
$$
g_{c}(0)=\frac{\gamma_{2}}{h_{2}^{d}}-\frac{\gamma_{1}}{h_{1}^{d}}\geq0\quad\iff\quad
\frac{\gamma_{2}}{\gamma_{1}}\geq\left(\frac{h_{2}}{h_{1}}\right)^{d},\qquad g_{s}(0)=1.
$$
2) When $r\to\infty$,
$$
g_{c}(r)\simeq\frac{\gamma_{2}h_{2}^{2}-\gamma_{1}h_{1}^{2}}{r^{d+2}}
%+\OO\left(\frac{1}{r^{d/2+2}}\right)
\geq0\quad\iff\quad\frac{\gamma_{2}}{\gamma_{1}}\geq\left(\frac{h_{1}}{h_{2}}\right)^{2},\qquad
g_{s}(r)\simeq1-\gamma_{2}+\gamma_{1}
%+\OO\left(\frac1r\right)
\leq0.
$$
3) The function $g_{c}$ has exactly one positive zero $r_{c}$ when 
$$h_{2}<h_{1}\quad\text{and}\quad(h_{2}/h_{1})^{d}<\gamma_{2}/\gamma_{1}<(h_{1}/h_{2})^{2},
$$
or
$$h_{1}<h_{2}\quad\text{and}\quad(h_{1}/h_{2})^{2}<\gamma_{2}/\gamma_{1}<(h_{2}/h_{1})^{d},$$ 
one zero at 0 when $\gamma_{2}/\gamma_{1}=(h_{2}/h_{1})^{d}$, 
and no zero otherwise.
The zero $r_{c}$ satisfies
\begin{equation}\label{sol-gc}
r_{c}^{2}=\frac{\alpha h_{1}^{2}-h_{2}^{2}}{1-\alpha}\qquad\text{with}\quad
\alpha:=\left(\frac{\gamma_2h_2^2}{\gamma_1 h_1^2}\right)^{2/(d+2)}.
\end{equation}
The function $g_{s}$ has precisely one positive zero $R_{s}$ when $1-\gamma_{2}+\gamma_{1}<0$ which is the unique positive real number satisfying
\begin{equation}\label{sol-gs}
\frac{\gamma_{2}}{(1+(h_{2}/R_{s})^{2})^{d/2}}-\frac{\gamma_{1}}{(1+(h_{1}/R_{s})^{2})^{d/2}}=1.
\end{equation}
In the limit case $1-\gamma_{2}+\gamma_{1}=0$, $g_{s}$ has still one zero $R_{s}$ when $1<\gamma_{2}/\gamma_{1}<(h_{1}/h_{2})^{2}$, and no zero when $\gamma_{2}/\gamma_{1}\geq(h_{1}/h_{2})^{2}$. From 2) we also have that $g_{s}(r)\to0$ as $r\to\infty$.
\end{lemma}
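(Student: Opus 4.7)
Parts 1) and 2) are routine: $g_{c}(0)$ and $g_{s}(0)$ come from direct substitution, and writing
$$
\frac{r^{d}}{(r^{2}+h^{2})^{d/2}}=\Bigl(1+\frac{h^{2}}{r^{2}}\Bigr)^{-d/2}=1-\frac{dh^{2}}{2r^{2}}+O(r^{-4})
$$
yields the asymptotic of $g_{s}$ at infinity; for $g_{c}$ one combines the two fractions over a common denominator to read off the leading $r^{-(d+2)}$ term. The inequality $g_{s}(\infty)\leq 0$ is just the admissibility condition $\gamma_{1}-\gamma_{2}\leq-1$.

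For the zeros of $g_{c}$, clearing denominators in $g_{c}(r)=0$ gives $\gamma_{2}h_{2}^{2}(r^{2}+h_{1}^{2})^{d/2+1}=\gamma_{1}h_{1}^{2}(r^{2}+h_{2}^{2})^{d/2+1}$, and extracting the $(d+2)/2$-th root linearizes this into $\alpha(r^{2}+h_{1}^{2})=r^{2}+h_{2}^{2}$ with $\alpha$ as in \eqref{sol-gc}. One solves explicitly $r^{2}=(\alpha h_{1}^{2}-h_{2}^{2})/(1-\alpha)$, which is a positive real number precisely when $1-\alpha$ and $\alpha h_{1}^{2}-h_{2}^{2}$ share a common nonzero sign. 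Translating $\alpha\gtrless 1$ into $\gamma_{2}/\gamma_{1}\gtrless(h_{1}/h_{2})^{2}$, and $\alpha h_{1}^{2}\gtrless h_{2}^{2}$ into $\gamma_{2}/\gamma_{1}\gtrless(h_{2}/h_{1})^{d}$, the four regions listed in the statement drop out; the borderline $\gamma_{2}/\gamma_{1}=(h_{2}/h_{1})^{d}$ gives $g_{c}(0)=0$ by part 1).

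The analysis of $g_{s}$ hinges on the derivative identity
$$
g_{s}'(r)=-dr^{d-1}g_{c}(r),\qquad r>0,
$$
obtained from $\frac{d}{dr}\bigl[\gamma_{i}r^{d}/(r^{2}+h_{i}^{2})^{d/2}\bigr]=d\gamma_{i}h_{i}^{2}r^{d-1}/(r^{2}+h_{i}^{2})^{d/2+1}$. Monotonicity of $g_{s}$ on $(0,\infty)$ is therefore fully controlled by the sign of $g_{c}$ already settled. Admissibility ($\gamma_{2}\geq\gamma_{1}+1$) rules out the scenario $g_{c}<0$ throughout, since that would force $\gamma_{2}/\gamma_{1}$ to lie below $\min((h_{2}/h_{1})^{d},(h_{1}/h_{2})^{2})\leq 1$, contradicting $\gamma_{2}>\gamma_{1}$. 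Hence either $g_{c}>0$ everywhere, in which case $g_{s}$ decreases strictly from $g_{s}(0)=1$, or $g_{c}$ changes sign exactly once and $g_{s}$ has a single interior extremum. In the strictly admissible case $g_{s}(\infty)<0$ forces exactly one positive zero, characterized by \eqref{sol-gs}; in the weakly admissible case $g_{s}(\infty)=0$ one must appeal to the next-order asymptotic $d(\gamma_{2}h_{2}^{2}-\gamma_{1}h_{1}^{2})/(2r^{2})$ of $g_{s}$, which produces a positive zero exactly when $\gamma_{2}/\gamma_{1}<(h_{1}/h_{2})^{2}$. The main obstacle is consolidating the interlocking conditions on $h_{2}/h_{1}$, on $\gamma_{2}/\gamma_{1}$, and on strict versus weak admissibility; the derivative identity above is the clean device that reduces this bookkeeping to the already-understood sign behavior of $g_{c}$.
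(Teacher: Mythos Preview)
Your proof is correct and follows essentially the same route as the paper: direct evaluation for 1) and 2), the explicit solution of $g_{c}(r)=0$ via the $(d+2)/2$-th root for 3), and the key derivative identity $g_{s}'(r)=-dr^{d-1}g_{c}(r)$ to reduce the analysis of $g_{s}$ to the already-established sign pattern of $g_{c}$. You actually supply more detail than the paper in two places---the observation that admissibility excludes $g_{c}<0$ throughout, and the use of the next-order term $d(\gamma_{2}h_{2}^{2}-\gamma_{1}h_{1}^{2})/(2r^{2})$ to resolve the weakly admissible case---whereas the paper simply asserts ``one or no zero, as stated in item 3)'' without spelling out the mechanism.
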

\begin{proof}
1) and 2) follows from the definitions of $g_{c}$ and $g_{s}$. \\
3) Solving $g_{c}(r)=0$ leads to
$$
\frac{r^{2}+h_2^2}{r^{2}+h_1^2}=\alpha
$$
which is equivalent to (\ref{sol-gc}). It is then easy to check the statements about the existence of the zero $r_{c}$.

For the function $g_{s}$, we have $g_{s}'(0)=0$ and it is easy to see that 
$$g_{s}'(r)=-dr^{d-1}g_{c}(r).$$ 
Hence $g_{s}'$ has at most 1 zero in $(0,\infty)$. Together with the sign change of $g_{s}$ between 0 and $\infty$, it implies that $g_{s}$ has exactly one zero in $(0,\infty)$ when $1-\gamma_{2}+\gamma_{1}<0$ and one or no zero when $1-\gamma_{2}+\gamma_{1}=0$, as stated in item 3). The equation (\ref{sol-gs}) follows directly from the expression for $g_{s}$.
\end{proof}
In view of Lemma \ref{lem-gc-gs}, it is quite natural to divide the study into the four following cases:
\begin{equation}\label{Cases}
\begin{cases}
\text{Case }A_{1}: h_{2}< h_{1} \quad\text{and}\quad1<\gamma_{2}/\gamma_{1}<(h_{1}/h_{2})^{2},\\
\text{Case }A_{2}: h_{2}< h_{1} \quad\text{and}\quad(h_{1}/h_{2})^{2}\leq\gamma_{2}/\gamma_{1},\\
\text{Case B}: h_{2}> h_{1} \quad\text{and}\quad1<\gamma_{2}/\gamma_{1}<(h_{2}/h_{1})^{d},\\
\text{Case C}: h_{2}> h_{1} \quad\text{and}\quad(h_{2}/h_{1})^{d}\leq\gamma_{2}/\gamma_{1}.
\end{cases}
\end{equation}
Note that this four cases correspond to the three domains shown in Figure \ref{Fig-main}, with $A_{1}$ and $A_{2}$ coresponding to the domain $A$.
\begin{figure}
\includegraphics[width=7cm]{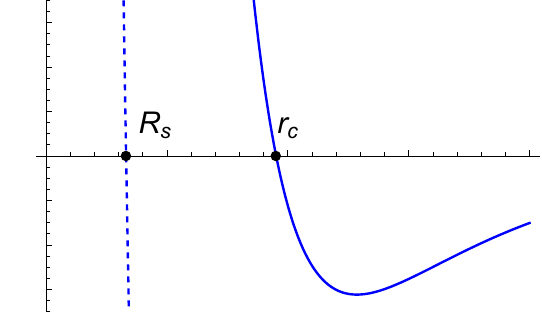}\hspace{1cm}
\includegraphics[width=7cm]{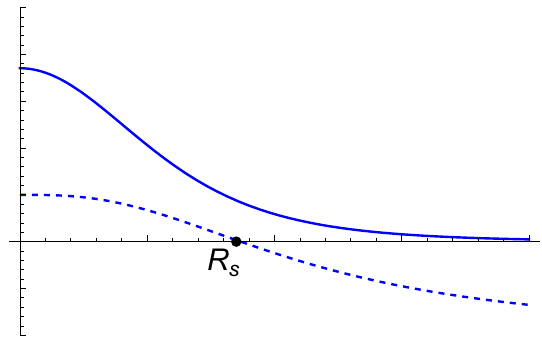}
\\[10pt]
\includegraphics[width=7cm]{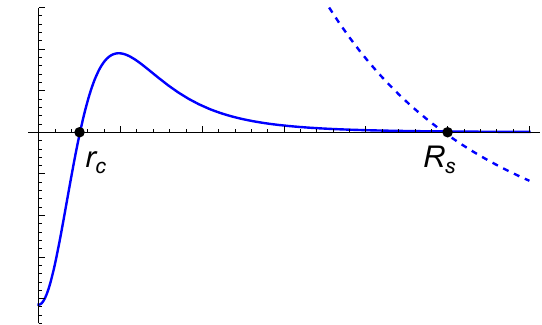}\hspace{1cm}
\includegraphics[width=7cm]{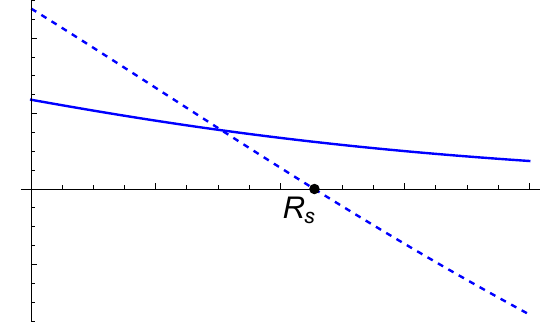}
\caption{Behavior of the densities $g_{c}$ (solid line) and $g_{s}$ (dashed line) in the four cases $A_{1},A_{2},B,C$ as defined in (\ref{Cases}), near the root $R_{s}$ of $g_{s}$, where it changes sign. The density $g_{c}$ has a root $r_{c}$ in cases $A_{1}$ and $B$, but no root in cases $A_{2}$ and $C$.}
\end{figure}
\begin{lemma}
Assume $g_{c}$ has a positive zero $r_{c}$ and $g_{s}$ has a positive zero $R_{s}$. Then
$$
R_{s}<r_{c}\quad\text{in case }A_{1},\qquad r_{c}<R_{s}\quad\text{in case B}.
$$
\end{lemma}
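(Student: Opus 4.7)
The plan is to exploit the identity $g_s'(r) = -dr^{d-1} g_c(r)$, already established in the proof of Lemma~\ref{lem-gc-gs}, which directly converts sign information on $g_c$ into monotonicity information on $g_s$. Since $g_c$ has a unique positive zero $r_c$ in both cases $A_1$ and $B$, and $g_s$ has a unique positive zero $R_s$, the comparison should reduce to locating $R_s$ relative to the unique critical point of $g_s$, which lies exactly at $r_c$.

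In Case $A_1$, combining $h_2 < h_1$ with $1 < \gamma_2/\gamma_1 < (h_1/h_2)^2$ and applying parts 1 and 2 of Lemma~\ref{lem-gc-gs}, I would first verify that $g_c(0) > 0$ and $g_c(r) < 0$ for $r$ large. Hence $g_c > 0$ on $(0, r_c)$ and $g_c < 0$ on $(r_c, \infty)$, so $g_s$ strictly decreases on $(0, r_c)$ and strictly increases on $(r_c, \infty)$ toward the limit $1 - \gamma_2 + \gamma_1 \leq 0$. Since $g_s$ is increasing on $(r_c, \infty)$ with a nonpositive limit, $g_s < 0$ throughout that interval; combined with $g_s(0) = 1 > 0$, the unique sign change of $g_s$ must lie in $(0, r_c)$, yielding $R_s < r_c$. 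In Case $B$, the endpoint signs of $g_c$ reverse: $h_1 < h_2$ together with $\gamma_2/\gamma_1 < (h_2/h_1)^d$ gives $g_c(0) < 0$, while $\gamma_2/\gamma_1 > 1 > (h_1/h_2)^2$ gives $g_c(r) > 0$ for large $r$. Thus $g_s$ strictly increases on $(0, r_c)$ and strictly decreases on $(r_c, \infty)$, attaining its maximum at $r_c$ with $g_s(r_c) > g_s(0) = 1 > 0$. Therefore $g_s > 0$ on $[0, r_c]$ and the zero $R_s$ must lie in $(r_c, \infty)$, giving $r_c < R_s$.

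No serious obstacle is anticipated: once the derivative identity is invoked, the proof is a routine monotonicity analysis. The only mild subtlety concerns the weakly admissible regime; in Case $B$ with $\gamma_2 = \gamma_1 + 1$, Lemma~\ref{lem-gc-gs} shows that $g_s$ has no positive zero, so the hypothesis that $R_s$ exists already restricts us to the admissible setting where $g_s(\infty) < 0$, and in Case $A_1$ the monotonicity argument above handles both $g_s(\infty) < 0$ and $g_s(\infty) = 0$ uniformly, so the strict inequalities hold throughout.
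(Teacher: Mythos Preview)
Your proposal is correct and follows essentially the same approach as the paper: both exploit the relation $g_s'(r)=-dr^{d-1}g_c(r)$ to turn the sign of $g_c$ on either side of $r_c$ into monotonicity of $g_s$, and then locate the unique zero $R_s$ accordingly. Your write-up is simply more explicit about the endpoint signs and the weakly admissible boundary case, whereas the paper records the monotonicity and adds a one-line remark that $R_s=r_c$ is impossible because $R_s$ is a simple zero of $g_s$.
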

\begin{proof}
In case $A_{1}$, the function $g_{s}$ is decreasing in $(0,r_{c}]$ and increasing in $[r_{c},\infty)$. Hence we must have $R_{s}<r_{c}$. Note that the equality $R_{s}=r_{c}$ cannot happen since $R_{s}$ is a zero of order 1 of $g_{s}$. In case $B$, the function $g_{s}$ is increasing in $(0,r_{c}]$ and decreasing in $[r_{c},\infty)$. Hence we must have $r_{c}<R_{s}$. Again, note that the equality $R_{s}=r_{c}$ cannot happen since $R_{s}$ is a zero of order 1 of $g_{s}$.
\end{proof}
\begin{proof}[Proof of Theorems \ref{main} and \ref{main2}]
We first observe that in all of the four cases $A_{1},A_{2},B,C$, it holds that $\supp\mu_{Q}\subset B_{R_{s}}$. Indeed, in view of (\ref{form-signed}) and the fact that $g_{s}(R)<0$ for $R>R_{s}$, we have $\eta_{Q,R}(S_{R})<0$ for $R>R_{s}$, and (iii) of Proposition \ref{prop-eta} applies.
\\
-- {\bf Cases $A_{1},A_{2},C$}: The equilibrium problem $\mu_{Q}$ of our problem equals the signed equilibrium measure $\eta_{Q,R_{s}}$ on $B_{R_{s}}$. Indeed,
$\eta_{Q,{R_{s}}}$ is a positive measure and $\supp\mu_{Q}\subset B_{R_{s}}$ so that item (ii) of Proposition \ref{prop-eta} applies. Note that in case $A_{2}$, when $1-\gamma_{2}+\gamma_{1}=0$, one has $R_{s}=\infty$ and the support of the equilibrium measure is the whole space $\R^{d}$.\\
-- {\bf Case B}: Here the density $g_{c}$ is negative near the origin. Hence, we consider the balayage of the continuous part $dg_{c}(|x|)/|S^{d-1}|dx$ of the signed equilibrium measure $\eta_{Q,R_{s}}$ outside a ball of radius $R<R_{s}$. For a single unit charge at $y=(0,y_{d+1})$ we get a measure supported on the sphere $S_{R}$, 
%and has a density with respect to the normalized surface measure $d\tilde\sigma_{R}(s)$ 
given by
\begin{equation*}%\label{bal-in}
\frac{d|y_{d+1}|^{2}}{R|S^{d-1}|} \left(\int_{B_{R}}\frac{1}{ |u - y|^{d+2}}
\frac{R^{2}-|u|^{2}}{|u-s|^{d}}du\right)\frac{d\sigma_{R}(s)}{|S^{d-1}|},
\end{equation*}
which is an analog of the measure (\ref{bal-in}). By a computation completely similar to the one after (\ref{bal-in}) and with Lemma \ref{int-d}, one gets the measure
$$
\frac{R}{(R^{2}+y_{d+1}^{2})^{d/2}}\frac{d\sigma_{R}(s)}{|S^{d-1}|}=
\frac{R^{d}}{(R^{2}+y_{d+1}^{2})^{d/2}}d\tilde\sigma_{R}(s).
$$
For the pair of charges we are interested in we thus get for the balayage outside of $B_{R}$,
$$
\left(\frac{\gamma_{2}}{(R^{2}+h_{2}^{2})^{d/2}}-\frac{\gamma_{1}}{(R^{2}+h_{1}^{2})^{d/2}}\right)R^{d}d\tilde\sigma_{R}(s),
$$
which vanishes when $R=R_{0}$ such that
$$
\left(\frac{R_{0}^{2}+h_{1}^{2}}{R_{0}^{2}+h_{2}^{2}}\right)^{d}
=\left(\frac{\gamma_{1}}{\gamma_{2}}\right)^{2}.
$$
We already know that $\mu_{Q}$ is supported on $B_{R_{s}}$, and with item (iv) of Proposition \ref{prop-eta}, we derive that $\mu_{Q}$ is also supported on $E_{R_{0}}$, hence on the shell  bounded by the spheres $S_{R_{0}}$ and $S_{R_{s}}$. Since the signed equilibrium measure of that shell equals the balayage of $\eta_{Q,B_{R_{s}}}$ onto that shell, which is a positive measure, item (ii) of Proposition \ref{prop-eta} applies, which shows the results in case C. Note that in the limit case $1-\gamma_{2}+\gamma_{1}=0$ of weak admissibility, $R_{s}=\infty$ and the support of the equilibrium measure becomes the unbounded set $E_{R_{0}}$.
\end{proof}
\begin{proof}[Proof of Theorem \ref{main3}] 
Actually, when $\gamma>1$, this theorem can be seen as a particular case of Theorem \ref{main}. Indeed, it suffices to choose $\gamma_{2}=\gamma$, $\gamma_{1}=0$, $h_{2}=h$ and $h_{1}$ as any positive real number, for instance a number larger than $h_{2}=h$. Then the statements in Theorem \ref{main3} follows from those in i) of Theorem \ref{main}. The same observation holds, when $\gamma=1$, with Theorem \ref{main2}.
\end{proof}
\begin{remark}
As shown above, the signed equilibrium measure $\eta_{Q,R}$ used to solve the problem has a singular part on the boundary of the ball (or two, one on each boundary, in the case where the support is a shell). It plays an essential role in determining the support of the equilibrium measure $\mu_Q$. However, these singular measures, supported on spheres, which are typical for the unweighted equilibrium problems in the Coulomb case, do not appear in the expression of $\mu_Q$. We can thus think of them as ``ghost singular measures''.
\end{remark}
\begin{remark}
\cite[Proposition 2.13]{LG} for $d\geq3$ and \cite[Theorem IV.6.1]{ST} for $d=2$ with the log kernel, state the following. Assume that $Q(|x|)=Q(r)$ is a radial field satisfying one of the conditions:\\[5pt]
1) $r^{d-1}Q'(r)$ is increasing on $(0,\infty)$,\\[5pt]
2) $Q(r)$ is convex on $(0,\infty)$,\\[5pt]
and is such that $\mu_{Q}$ exists. 
Let $r_{0}$ be the smallest number for which $Q'(r)>0$ for all $r > r_{0}$, and $R_{0}$ the
smallest solution of $R^{d-1}Q'(R)=\max(1,d-2)$. 
Then the support of $\mu_{Q}$ is the shell $\{r_{0}\leq|x|\leq R_{0}\}$ (or ball if $r_{0}=0$).
Moreover, $\mu_{Q}$ is given by
$$
d\mu_{Q}(x)=\frac{1}{\max(1,d-2)}(r^{d-1}Q'(r))'drd\tilde\sigma(\bar x),\qquad x=r\bar x,~r=|x|.
$$
Condition 1) above is actually satisfied for our external field $Q$ for some values of the parameters $h_{1},h_{2},\gamma_{1},\gamma_{2}$. Indeed,
$$Q'(r) = \max (1,d-2)r\left(\gamma_2\,(r^2+h_2^2)^{-d/2} - \gamma_1\,(r^2+h_2^2)^{-d/2}\right),$$
and
$$(r^{d-1}\,Q'(r))' = \max (1,d-2)d r^{d-1}\left(\frac{\gamma_2 h_2^2}{(r^2+h_2^2)^{d/2+1}}-\frac{\gamma_1 h_1^2}{(r^2+h_1^2)^{d/2+1}}\right),$$
which is positive on $(0,\infty)$ when
\begin{equation*}\label{expression}
F(r):=\frac{r^2+h_1^2}{r^2+h_2^2}>\left(\frac{\gamma_1 h_1^2}{\gamma_2 h_2^2}\right)^{2/(d+2)}. 
\end{equation*}
Since $F(r)$ is a monotonic function of $r$, the above inequality is satisfied on $(0,\infty)$ if and only if
$$
\min(F(0),F(\infty))=\min(1,(h_{1}/h_{2})^{2})\geq\left(\frac{\gamma_1 h_1^2}{\gamma_2 h_2^2}\right)^{\frac{2}{d+2}}
~\iff~ \max\left(\left(\frac{h_{1}}{h_{2}}\right)^{2},\left(\frac{h_{2}}{h_{1}}\right)^{d}\right)
\leq\frac{\gamma_{2}}{\gamma_{1}},
$$ 
or, equivalently, when $\left({\gamma_1}/{\gamma_2}\right)^{1/d} \leq {h_1}/{h_2} \leq \left({\gamma_2}/{\gamma_1}\right)^{1/2}$,
which corresponds exactly to the cases $A_{2}$ and $C$ in (\ref{Cases}). 
\\
Concerning condition 2), the second derivative of $Q$ equals
$$
Q''(r)=\max(1,d-2)\left(\frac{\gamma_2 ((1-d)r^{2}+h_2^2)}{(r^2+h_2^2)^{d/2+1}}
-\frac{\gamma_1((1-d)r^{2} +h_1^2)}{(r^2+h_1^2)^{d/2+1}}\right)
$$
which cannot be positive on $(0,\infty)$ since, near infinity, $Q''(r)$ has the sign of $\gamma_{1}-\gamma_{2}$ which is negative.
In conclusion, the above-mentioned theorems can solve some of our cases but not all of them. In particular, the most interesting case where the equilibrium measure is supported on a shell cannot be recovered by these results.
\end{remark}
%\vspace{5cm}

\obeylines
\texttt{
R.\,Orive (rorive@ull.es)
Departmento de An\'{a}lisis Matem\'{a}tico, Universidad de La Laguna, 
38200 La Laguna (Tenerife), SPAIN.
\medskip
F.\,Wielonsky (franck.wielonsky@univ-amu.fr)
Aix Marseille Universit\'e, CNRS, I2M, Marseille, France
%Laboratoire I2M, UMR CNRS 7373, Universit\'e Aix-Marseille, 
%Campus Saint-Charles, 3 place Victor Hugo, Case 39
F-13453 Marseille Cedex 20, FRANCE.
}
\end{document}